\documentclass[11pt]{scrartcl}
\usepackage[utf8]{inputenc}
\usepackage{amsmath,amssymb,amsthm}
\usepackage{enumerate}
\usepackage{url}
\newtheorem{thm}{Theorem}[section]
\newtheorem{dfn}[thm]{Definition}
\newtheorem{lem}[thm]{Lemma}
\newtheorem{prop}[thm]{Proposition}

\theoremstyle{remark}
\newtheorem{rem}[thm]{Remark}

\DeclareMathOperator{\scal}{Scal}
\DeclareMathOperator{\ric}{Ric}

\DeclareMathOperator{\tr}{trace}
\DeclareMathOperator{\Id}{I}
\DeclareMathOperator{\id}{id}

\newcommand{\R}{\mathrm{R}}

\newcommand{\ps}[2]{\left\langle #1,#2 \right\rangle}
\newcommand{\eps}{\varepsilon}
\newcommand{\rt}{\longrightarrow}
\newcommand{\C}{{\mathbb C}}
\newcommand{\re}{{\mathbb R}}

\date{}
\begin{document}

\title{Positive isotropic curvature and \\ self-duality in dimension 4}

\author{Thomas Richard and Harish Seshadri}

\maketitle

\begin{abstract}
  We study a positivity condition for the curvature of
  oriented Riemannian 4-manifolds: The half-$PIC$ condition. It is a slight
  weakening of the positive isotropic curvature ($PIC$) condition introduced by M. Micallef and
  J. Moore.

 We observe that the half-$PIC$ condition is preserved by the Ricci
  flow and satisfies a maximality property among all Ricci flow
  invariant positivity conditions on the curvature of oriented
  4-manifolds.

  We also study some  geometric and
  topological aspects of half-$PIC$ manifolds.
\end{abstract}

\section{Introduction}
Let $(M,g)$ be an oriented Riemannian 4-manifold. We say that $M$ has
{\it positive isotropic curvature on self-dual 2-forms} (which we
abbreviate as $PIC_+$) if the complex linear extension of the
curvature operator $\R: \Lambda^2 T_pM \otimes \C \to \Lambda^2 T_pM
\otimes \C$ satisfies
\[\ps{\R (v \wedge w)}{v \wedge w}>0\]
for every $p \in M$ and $v \wedge w \in \Lambda^2_+T_pM \otimes \C
\cap S_0$, where $\ps{\ }{\ }$ denotes the Hermitian inner product on
$\Lambda^2 T_pM \otimes \C$, $\Lambda^2_+ T_pM$ denotes the self-dual
2-forms at $p$ and  $S_0 \subset \Lambda^2T_p M\otimes\mathbb{C}$
denotes the set of complex $2$-forms that can be written as $v\wedge
w$ where $v,w\in T_pM\otimes\C$ span a complex plane which is
isotropic for the complex bilinear extension of the Riemannian metric
on $T_pM\otimes\C$.

This condition is a variation of the $PIC$ condition introduced by
Micallef and Moore in \cite{mm}, which requires positivity for all
$v\wedge w$ such $\{v,w\}$ spans an isotropic complex plane, without
requiring $v\wedge w$ to be self dual.

One has the following alternative definitions (see Proposition \ref{dc}): 
\begin{enumerate}
\item  If ``$\scal$'' denotes scalar curvature, $\Id: \Lambda^2 TM \to
  \Lambda^2 TM$ is the identity operator and $ \R_{\mathcal{W}_+}: \Lambda^2_+TM \to \Lambda^2_+TM$ denotes the self-dual part of the Weyl tensor of $\R$ then
\[\tfrac{\scal}{6}\Id-\R_{\mathcal{W}_+}> 0\]
on $\Lambda^2_+TM.$

\item $M$ is $PIC_+$ if for any $p\in M$ and any \emph{oriented} orthonormal basis $(e_1,e_2,e_3,e_4)$ of $T_pM$, we have
\[R_{1313}+R_{1414}+R_{2323}+R_{2424}-2R_{1234}>0\]
where $R_{ijkl}=\ps{\R(e_i\wedge e_j)}{e_k\wedge e_l}$.

\end{enumerate}

One has similar definitions in the nonnegative case, which we denote by $NNIC_{+}$ and the case of $\wedge^2_{-} T_pM$, which we denote by  $PIC_{-}$ and $NNIC_{-}$. If a metric is either $PIC_{+}$ or $PIC_{-}$ we say that it is {\it half-PIC}.
As examples we note that:
\begin{enumerate}
\item Any anti-self-dual $4$-manifold i.e., a manifold with $\R_{\mathcal{W}_+}=0$,  with positive scalar curvature is $PIC_+$.
\item All $4$-manifolds with positive isotropic curvature are $PIC_+$. This
  includes $\mathbb{S}^4$, $\mathbb{S}^3\times\mathbb{S}^1$. It is also known that connected sums of manifolds with positive isotropic curvature admit metric with positive isotropic curvature.
\item Any K\"ahler 4-manifold whose scalar curvature is nonnegative is
  $NNIC_+$ but not $PIC_+$ (with its standard orientation, see
  Proposition
  \ref{prop:kaehlerPIC+}). Thus $\mathbb{CP}^2$ is $NNIC_+$. Moreover,
  since $\mathbb{CP}^2$
  is half conformally flat (it satisfies $\R_{\mathcal{W}_-}=0$), it
  is also $PIC_-$.
\end{enumerate}
These examples show that the $PIC_+$ condition is strictly weaker than
the usual $PIC$ condition.

By applying Wilking's criterion ~\cite{wil} it is easy to see (see Proposition \ref{pre}) that the $NNIC_{\pm}$ conditions are preserved by Ricci flow. We were informed by S. Brendle that R. Hamilton knew this fact and the proof is similar to Hamilton's proof that $PIC$ is preserved by the Ricci flow in dimension 4 (The fact that  $PIC$ is prserved by Ricci flow in all dimensions is a relatively recent result due to  S. Brendle-R. Schoen \cite{bs1} and H.T. Nguyen  \cite{ngu}). The main result of this note is that the $NNIC_{\pm}$ conditions are the minimal Ricci flow invariant nonnegativity conditions in dimension $4$. More precisely, if $S_B^2 \Lambda ^2 \re^4$ denotes the space of algebraic curvature operators in dimension $4$, we have the following:

\begin{thm}
Let $\mathcal{C} \subsetneq \mathcal{C}_{\scal} \subset S_B^2 \Lambda ^2 \re^4$ be an oriented Ricci flow invariant curvature cone. $\mathcal{C}$ is then contained in one of the cones $NNIC_+$ or
$NNIC_-$. Moreover, if $\mathcal{C}$ is an unoriented curvature cone, it is contained in the $NNIC$ cone.
\end{thm}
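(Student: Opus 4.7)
The plan is to establish the oriented statement and deduce the unoriented case as a corollary. For the latter, if $\mathcal{C}$ is $O(4)$-invariant with $\mathcal{C}\subsetneq\mathcal{C}_\scal$, viewing it as $SO(4)$-invariant and applying the oriented statement yields $\mathcal{C}\subseteq NNIC_+$ or $\mathcal{C}\subseteq NNIC_-$; orientation reversal fixes $\mathcal{C}$ but, by characterization~(2) (only $R_{1234}$ is orientation-sensitive), interchanges $NNIC_+$ and $NNIC_-$, so both inclusions hold and $\mathcal{C}\subseteq NNIC_+\cap NNIC_- = NNIC$.

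For the oriented statement, I would argue by contradiction: suppose $\mathcal{C}$ is contained in neither $NNIC_+$ nor $NNIC_-$. In the $SO(4)$-invariant block decomposition $\Lambda^2\re^4=\Lambda^2_+\oplus\Lambda^2_-$, write any curvature operator as
\[
R=\begin{pmatrix}A&B\\B^{T}&C\end{pmatrix},\qquad \tr A=\tr C=\tfrac{\scal}{4},
\]
where $A$ and $C$ encode the scalar and self/anti-self-dual Weyl parts. The two conditions become $A\le\tfrac{\scal}{4}\Id$ and $C\le\tfrac{\scal}{4}\Id$, and the hypothesis produces elements $R_\pm\in\mathcal{C}$ violating them. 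Since $\mathcal{C}\subsetneq\mathcal{C}_\scal$ is a closed convex $SO(4)$-invariant cone, it admits a supporting linear functional $\ell$ that is not a nonnegative multiple of $\scal$; picking a boundary point $R_0\in\partial\mathcal{C}$ with $\ell(R_0)=0$ and $\scal(R_0)>0$, Hamilton's maximum principle then forces $\ell(Q(R_0))\ge 0$ for the reaction operator $Q(R)=R^2+R^{\#}$.

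My plan is to combine this infinitesimal compatibility with the $SO(4)$-equivariance of $Q$ and the splitting $\mathfrak{so}(4)=\mathfrak{so}(3)_+\oplus\mathfrak{so}(3)_-$ to classify all admissible $\ell$. I expect that, modulo a positive multiple of $\scal$, the only $SO(4)$-orbits of Hamilton-compatible supporting functionals are those that maximize an eigenvalue of $A$ or of $C$, i.e., the boundary functionals of $NNIC_\pm$; sweeping $\ell$ over its orbit then forces $\mathcal{C}\subseteq NNIC_+$ or $\mathcal{C}\subseteq NNIC_-$, contradicting the existence of $R_\mp$. The main obstacle is precisely this classification step: ruling out every other $SO(4)$-equivariant family of Hamilton-compatible supporting functionals. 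Concretely, one computes $Q$ in block form so that the blocks $Q_A,Q_B,Q_C$ are explicit polynomials in $(A,B,C)$, and then rules out mixed supporting functionals coupling the three blocks nontrivially. This is a dimension-four algebraic analog of Wilking's criterion~\cite{wil}, and I would carry it out by diagonalizing $A$ and $C$ via the independent $SO(3)_\pm$ actions and performing a finite case analysis on the eigenvalue configurations of $A,C$ and the singular values of $B$.
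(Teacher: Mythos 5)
Your reduction of the unoriented case to the oriented one (orientation reversal fixes $\mathcal{C}$ but swaps $NNIC_+$ and $NNIC_-$, forcing $\mathcal{C}\subseteq NNIC_+\cap NNIC_-=NNIC$) is correct and is exactly what the paper does. The oriented case, however, has a genuine gap: the entire argument is deferred to a ``classification of Hamilton-compatible supporting functionals'' which you acknowledge is the main obstacle, and the classification you expect to find is false as stated. There are many Ricci flow invariant cones strictly contained in $NNIC_\pm$ --- nonnegative curvature operator, $2$-nonnegative curvature operator, the $PIC$ cone itself, nonnegative complex sectional curvature --- and each of these contributes Hamilton-compatible pairs $(\ell,\R_0)$ whose functionals $\ell$ are \emph{not}, modulo $\scal$, of the form ``sum of the two largest eigenvalues of the $\Lambda^2_\pm$-block'' (e.g.\ $\ell(\R)=\ps{\R\omega}{\omega}$ for a single generic $2$-form $\omega$ supports the nonnegativity cone). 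So the finite case analysis you propose cannot terminate in the list you predict; what one would actually need is the weaker statement that the dual cone $\mathcal{C}^*$ must contain at least one functional of $PIC_+$- or $PIC_-$-type, and nothing in the proposal makes progress toward that.

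The paper's route is quite different and worth contrasting. The key lemma is an averaging argument: integrating $g.\R$ over the subgroup $\mathbb{S}^3_-\subset SO(4,\re)$ (which acts trivially on $\Lambda^2_+\re^4$ and irreducibly on $\Lambda^2_-\re^4$) kills the traceless Ricci part and $\R_{\mathcal{W}_-}$, so convexity and $SO(4)$-invariance give $\R_{\Id}+\R_{\mathcal{W}_+}\in\mathcal{C}$ for every $\R\in\mathcal{C}$. If $\mathcal{C}\not\subseteq NNIC_+$, this produces an element of $\mathbb{R}\Id\oplus\mathcal{W}_+$ in $\mathcal{C}$ that is not $2$-nonnegative on $\Lambda^2_+$; a further symmetrization and addition of a multiple of $\Id$ exhibits the curvature operator of $\overline{\mathbb{CP}}^2$ in the interior of $\mathcal{C}$, whence $\mathcal{W}_+\subseteq\mathcal{C}$ (by the argument of Corollary 0.3 of the authors' earlier paper), and symmetrically $\mathcal{W}_-\subseteq\mathcal{C}$. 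The paper then invokes its Proposition 3.6 from that same reference --- any Ricci flow invariant cone containing all of $\mathcal{W}$ equals $\mathcal{C}_{\scal}$ --- to contradict $\mathcal{C}\subsetneq\mathcal{C}_{\scal}$. Note that the Ricci flow invariance enters only through this last cited proposition; the heart of the argument is representation-theoretic averaging, not an analysis of the reaction term $Q$ at boundary points.
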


This is proved in Section \ref{sec:half-pic-as}. We note that the above result applies to {\it all} Ricci flow invariant cones. In ~\cite{gms} a similar result is proved in dimensions $\ge 5$: {\it All the Ricci flow invariant cones constructed by Wilking ~\cite{wil} are contained in the $NNIC$ cone}.

In Section \ref{sec:rigid-einst-half}, we show that compact Einstein $4$-manifolds which are $PIC_+$ are rigid:
\begin{prop}\label{ein}
A compact oriented Einstein $PIC_-$ 4-manifold is isometric, up to scaling, to $\mathbb{S}^4$ or
$\mathbb{CP}^2$ with their standard metrics. Moreover, an Einstein $NNIC_-$ manifold is either
$PIC_-$ or flat or a negatively oriented K\"ahler-Einstein surface with nonnegative scalar curvature.
\end{prop}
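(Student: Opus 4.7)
The plan is to apply Derdzinski's Bochner--Weitzenb\"ock identity for the anti-self-dual Weyl curvature $\R_{\mathcal{W}_-}$ on a compact Einstein 4-manifold, combined with a pointwise algebraic bound on its determinant implied by the $NNIC_-$ condition. The identity reads
\[
\int_M |\nabla \R_{\mathcal{W}_-}|^2 \,dV_g \;=\; 18\int_M \det \R_{\mathcal{W}_-} \,dV_g \;-\; \tfrac{1}{2}\int_M \scal \cdot |\R_{\mathcal{W}_-}|^2 \,dV_g,
\]
where $\det \R_{\mathcal{W}_-}$ denotes the determinant of the trace-free symmetric endomorphism $\R_{\mathcal{W}_-}$ of the rank-three bundle $\Lambda^2_- TM$.

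Next I would establish the pointwise algebraic inequality $\det \R_{\mathcal{W}_-} \leq \tfrac{w_3}{6}|\R_{\mathcal{W}_-}|^2$, where $w_1 \leq w_2 \leq w_3$ denote the eigenvalues of $\R_{\mathcal{W}_-}$. This follows from the identity $\det \R_{\mathcal{W}_-} = w_3(w_3^2 - |\R_{\mathcal{W}_-}|^2/2)$ together with the bound $w_3^2 \leq \tfrac{2}{3}|\R_{\mathcal{W}_-}|^2$ (itself a consequence of $w_1^2 + w_2^2 \geq (w_1+w_2)^2/2 = w_3^2/2$), with equality exactly when $w_1 = w_2$. Combining with the $NNIC_-$ bound $w_3 \leq \scal/6$ yields $18\det \R_{\mathcal{W}_-} \leq \tfrac{\scal}{2}|\R_{\mathcal{W}_-}|^2$ pointwise, and substitution into the Bochner identity forces $\int_M |\nabla \R_{\mathcal{W}_-}|^2 \leq 0$. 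Hence $\R_{\mathcal{W}_-}$ is parallel, and wherever $\R_{\mathcal{W}_-} \neq 0$ both pointwise inequalities must saturate, producing the eigenvalue spectrum $(-\scal/12, -\scal/12, \scal/6)$. Since $|\R_{\mathcal{W}_-}|^2$ is then constant, the dichotomy is global: either $\R_{\mathcal{W}_-} \equiv 0$, or $\R_{\mathcal{W}_-}$ has this K\"ahler-type spectrum everywhere.

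Under the strict hypothesis $PIC_-$, the equality $w_3 = \scal/6$ is excluded, so $\R_{\mathcal{W}_-} \equiv 0$; moreover $0 \leq w_3 < \scal/6$ gives $\scal > 0$. Thus $(M,g)$ is a half-conformally-flat Einstein 4-manifold of positive scalar curvature, and Hitchin's classification theorem yields that $(M,g)$ is isometric, up to rescaling, to the round $\mathbb{S}^4$ or the Fubini--Study $\mathbb{CP}^2$, proving the first assertion. For the $NNIC_-$ statement, the two alternatives in the dichotomy split into three cases. If $\R_{\mathcal{W}_-} \equiv 0$ and $\scal > 0$, one recovers $\mathbb{S}^4$ or $\mathbb{CP}^2$, both strictly $PIC_-$. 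If $\R_{\mathcal{W}_-} \equiv 0$ and $\scal = 0$, the metric is Ricci-flat and half-conformally flat, hence either flat or (up to a finite quotient) a K3 surface with a Calabi--Yau metric, the latter being a negatively oriented K\"ahler--Einstein surface with zero scalar curvature. If $\R_{\mathcal{W}_-} \not\equiv 0$, the $\scal/6$-eigenline of $\R_{\mathcal{W}_-}$ is a parallel real line subbundle of $\Lambda^2_- TM$, and a unit parallel section (obtained perhaps after passing to a two-fold cover to trivialize this line) is a parallel anti-self-dual $2$-form $\omega$; by the standard dictionary, $\omega$ is the K\"ahler form of a complex structure inducing the \emph{reversed} orientation, so $(M,g)$ is a negatively oriented K\"ahler--Einstein surface with positive scalar curvature. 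The main subtlety is to commit to the correct sign convention in Derdzinski's Weitzenb\"ock identity so that the $NNIC_-$ bound combines with it in the favourable direction, and to handle the orientability of the parallel $\scal/6$-eigenline in the last case.
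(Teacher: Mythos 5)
Your proof is correct in substance but follows a genuinely different route from the paper's. The paper works with the full curvature operator: it invokes Brendle's elliptic identity $\Delta\R+2Q(\R)=6\R$ for Einstein metrics, translates it to $\tilde\R=\R-\kappa\Id$ where $\kappa$ is extremal for the $NNIC_-$ condition, and uses convexity and Ricci flow invariance of the $NNIC_-$ cone (so that $\Delta\tilde\R$, $Q(\tilde\R)$ and $-\tilde\R$ all lie in the tangent cone) to force $\kappa=1$ and hence $\R_{\mathcal{W}_-}=0$; the borderline $NNIC_-$ case is then handled via Berger's holonomy classification together with the Wilking/Brendle--Schoen strong maximum principle. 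You instead use the classical Derdzi\'nski--Gursky--LeBrun Weitzenb\"ock identity for $\R_{\mathcal{W}_-}$ alone, plus the elementary eigenvalue estimates $\det\R_{\mathcal{W}_-}=w_3(w_3^2-\tfrac12|\R_{\mathcal{W}_-}|^2)$ and $w_3^2\le\tfrac23|\R_{\mathcal{W}_-}|^2$ combined with the $NNIC_-$ bound $w_3\le\scal/6$. This is exactly the alternative the authors acknowledge in the introduction (``Another proof can be given using the equality case of a theorem of M.~Gursky and C.~LeBrun''). Your constants check out: with the operator norm on $\Lambda^2_-$ the K\"ahler spectrum $(-\scal/12,-\scal/12,\scal/6)$ saturates $18\det\R_{\mathcal{W}_-}=\tfrac{\scal}{2}|\R_{\mathcal{W}_-}|^2$, and your equality analysis correctly extracts that spectrum in the non-vanishing case. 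What your approach buys is self-containedness and the simultaneous treatment of rigidity and the borderline case by one equality analysis, with no Ricci flow input at all; what the paper's approach buys is generality --- Brendle's scheme applies verbatim to any convex Ricci-flow-invariant cone and is the version one would hope to push to solitons.

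One soft spot, which you flag but do not resolve: in the case $\R_{\mathcal{W}_-}\not\equiv 0$, passing to a double cover to orient the parallel $\scal/6$-eigenline only shows that a double cover of $M$ is negatively K\"ahler, which is strictly weaker than the stated conclusion. This is a real issue: the quotient of $\mathbb{S}^2\times\mathbb{S}^2$ by the free antipodal--antipodal involution is Einstein, $NNIC_-$, and only locally K\"ahler, since the involution anticommutes with every compatible complex structure and hence reverses the eigenline. The paper's own proof has the same blind spot (it asserts that quotients of $\mathbb{S}^2\times\mathbb{S}^2$ are negatively K\"ahler), so this does not count against your argument relative to theirs, but if you want a clean statement you should either add ``up to a two-fold cover'' or rule out non-orientability of the eigenline by a separate holonomy argument in the irreducible case.
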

The method of proof we use is based on a Bochner type formula for
Einstein curvature operators arising from Ricci flow, which was introduced by S. Brendle
in \cite{bre}. Our method of proof is based on a subsequent work of Brendle \cite{bre2}).  Another proof can be given using the equality case of a
theorem of M. Gursky and C. LeBrun in \cite{gl}.

In Section \ref{sec:topology-half-pic} we consider the topological
implications of the $PIC_+$ condition. These results are
obtained using standard geometric techniques, and their proofs are thus
only sketched.

The first observation is a modification of a result of Micallef-Wang \cite{mw}
\begin{prop}
Let $M_i$, $i=1,2$, be  compact oriented 4-manifolds admitting metrics with $PIC_{+}$. Then the connected sum $M_1 \# M_2$ admits
a metric with $PIC_{+}$.
\end{prop}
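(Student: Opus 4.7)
The plan is to adapt the construction of Micallef and Wang~\cite{mw}, who proved the corresponding statement for $PIC$. The strategy is to glue $M_1$ and $M_2$ along a rotationally symmetric neck modeled on the product metric on $\mathbb{S}^3(1) \times \mathbb{R}$.

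First I check that this model metric is itself $PIC_+$ (in fact $PIC$). Using definition~(2), pick any oriented orthonormal basis $(f_1,f_2,f_3,f_4)$ of the tangent space at a point and decompose $f_a = u_a + \alpha_a\,\partial_t$ with $u_a$ tangent to the $\mathbb{S}^3$ factor. The product structure implies $R_{abcd}(f) = R^{\mathbb{S}^3}_{abcd}(u)$, and combining the constant-curvature formula $R^{\mathbb{S}^3}_{abcd}(u) = \ps{u_a}{u_c}\ps{u_b}{u_d}-\ps{u_a}{u_d}\ps{u_b}{u_c}$ with $\ps{u_a}{u_b} = \delta_{ab}-\alpha_a\alpha_b$ and $\sum_a \alpha_a^2 = 1$, a direct computation yields
\[R_{1313}+R_{1414}+R_{2323}+R_{2424}-2R_{1234}=2\]
for every oriented orthonormal frame.

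Next, following~\cite{mw}, pick small $r > 0$ and points $p_i \in M_i$, excise the geodesic balls $B(p_i,r)$, and connect the resulting boundaries by a neck diffeomorphic to $\mathbb{S}^3 \times [0,L]$ endowed with a warped-product metric $dt^2 + f(t)^2 g_{\mathbb{S}^3(1)}$. The profile $f$ is chosen to match the induced metric on $\partial B(p_i,r)$ at each end, and to equal a constant on a central plateau; on the plateau the neck metric reduces to a rescaling of the model above and is therefore $PIC_+$.

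Since the $PIC_+$ condition amounts to strict positivity of the continuous symmetric endomorphism $\tfrac{\scal}{6}\Id - \R_{\mathcal{W}_+}$ of $\Lambda^2_+ TM$, it is an open condition on algebraic curvature tensors, and hence on metrics in the $C^2$-topology. Off the neck the constructed metric coincides with $g_i$ (thus $PIC_+$ by hypothesis), and on the central plateau it is $PIC_+$ by the model calculation. The main technical obstacle is then the two transition regions, where one must choose $f$ so that the warped-product metric is $PIC_+$ while joining the geodesic-sphere data to the plateau. Under this ansatz, $PIC_+$ reduces to a pointwise inequality among $f$, $f'$ and $f''$; Micallef-Wang arrange the stronger $PIC$ inequality in exactly this setting, and since $PIC \Rightarrow PIC_+$, their profile works here as well. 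For $r$ small, the geodesic sphere $\partial B(p_i,r)$ is $C^2$-close to a round sphere of radius $r$, so the joining can be performed by an arbitrarily small perturbation, which preserves $PIC_+$ by openness.
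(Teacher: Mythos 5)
Your strategy is essentially the ``by hand'' version of what the paper does. The paper's proof is one line: it quotes the surgery/connected-sum stability results of \cite{gms} (see also Hoelzel \cite{hoe}), whose hypothesis is precisely that the curvature operator of $\mathbb{R}\times\mathbb{S}^3$ lies in the \emph{interior} of $\mathcal{C}_{IC_+}$. Your frame computation giving the value $2$ for every oriented orthonormal frame is exactly that verification, and it is correct; so the substantive input of your write-up coincides with the paper's. Where you diverge is that the paper then invokes the general machinery (valid for any open convex $SO(4)$-invariant cone containing the cylinder in its interior), while you propose to rerun the Micallef--Wang neck construction directly.

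The step I would not accept as written is the handling of the transition regions: ``Micallef--Wang arrange the stronger $PIC$ inequality in exactly this setting, and since $PIC\Rightarrow PIC_+$, their profile works here as well.'' In the transition region adjoining $M_i$ the metric is not a warped product over the round sphere, and its curvature is not governed by the profile $f$ alone: as $r\to 0$ the curvature there is a perturbation of $\R_{g_i}(p_i)$, which is only assumed $PIC_+$ and need not be $PIC$ (e.g.\ an anti-self-dual metric of positive scalar curvature). So the Micallef--Wang conclusion that this region is $PIC$ is not available, and the implication $PIC\Rightarrow PIC_+$ cannot be invoked there. What their construction (and its axiomatization in \cite{gms}, \cite{hoe}) actually provides is a decomposition of the curvature in the transition region as the original curvature of $g_i$, plus bending terms lying in the relevant nonnegativity cone, plus errors that tend to $0$ as $r\to 0$; one concludes using convexity of $\mathcal{C}_{IC_+}$, the inclusion of the bending terms' cone in $NNIC_+$, and openness of the strict condition. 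Rephrased this way your argument closes; as stated, the justification of the transition regions is a gap.
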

Next, a standard Bochner formula argument shows:
\begin{prop}
If an oriented $4$-manifold $(M^4,g)$ is compact and $PIC_+$, then
$b_2^+(M)=0$. If $(M,g)$ is $NNIC_+$ then $b_+(M)\leq 3$, moreover, if $b_+(M)\geq
  2$ then $(M,g)$ is either flat or isometric to a $K3$ surface.
\end{prop}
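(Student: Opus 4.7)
The argument hinges on the Weitzenböck formula for self-dual $2$-forms on an oriented Riemannian $4$-manifold: for $\omega\in\Gamma(\Lambda^2_+TM)$,
\[
\Delta_H\omega \;=\; \nabla^*\nabla\omega \,-\, 2\,\R_{\mathcal{W}_+}(\omega) \,+\, \tfrac{\scal}{3}\,\omega.
\]
When $\omega$ is harmonic, taking the $L^2$ pairing with $\omega$ and integrating rearranges to
\[
\int_M |\nabla\omega|^2 \;+\; 2\int_M \ps{\bigl(\tfrac{\scal}{6}\Id-\R_{\mathcal{W}_+}\bigr)\omega}{\omega} \;=\; 0.
\]
By the equivalent formulation (1) of $PIC_+$ from the introduction, the operator $\tfrac{\scal}{6}\Id-\R_{\mathcal{W}_+}$ is strictly positive; both integrands are then nonnegative, and the second is strictly positive wherever $\omega\neq 0$. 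This forces $\omega\equiv 0$ and therefore $b_2^+(M)=0$.

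\textbf{The $NNIC_+$ case.} Under the weaker assumption $\tfrac{\scal}{6}\Id-\R_{\mathcal{W}_+}\geq 0$ the same identity forces both terms to vanish, so every harmonic self-dual $2$-form is parallel. Since a parallel section of $\Lambda^2_+TM$ is determined by its value at one point, evaluation at any $p\in M$ embeds the space of harmonic self-dual $2$-forms into the $3$-dimensional space $\Lambda^2_+T_pM$, yielding $b_+(M)\leq 3$.

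Now suppose $b_+(M)\geq 2$ and pick linearly independent parallel sections $\omega_1,\omega_2$. They span an oriented parallel rank-$2$ sub-bundle of $\Lambda^2_+TM$ whose pointwise orthogonal complement is an oriented parallel real line bundle in $\Lambda^2_+TM$; such a bundle is trivial and admits a global parallel unit section $\omega_3$. Hence $\Lambda^2_+TM$ is trivialized by a parallel orthonormal frame, so the restricted holonomy acts trivially on $\Lambda^2_+$. Under the splitting $\mathfrak{so}(4)\cong\mathfrak{su}(2)_+\oplus\mathfrak{su}(2)_-$ this forces the restricted holonomy algebra into $\mathfrak{su}(2)_-$, making $(M,g)$ locally hyperkähler and in particular Ricci-flat and anti-self-dual. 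Invoking the classification of compact hyperkähler $4$-manifolds then yields that $(M,g)$ is either flat or isometric to a $K3$ surface with a Calabi--Yau metric.

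The Bochner step itself is entirely routine; the only non-routine inputs are the orientation argument that promotes two parallel self-dual $2$-forms to a full parallel trivialization of $\Lambda^2_+TM$, and the final appeal to the classification of compact hyperkähler $4$-manifolds. The remainder follows directly from the Weitzenböck identity paired with the defining positivity condition of $PIC_+/NNIC_+$.
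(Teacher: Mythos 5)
Your proof is correct and follows essentially the same route as the paper, which simply quotes the Weitzenb\"ock formula $\Delta_d\omega_+=\Delta\omega_++2\bigl(\tfrac{\scal}{6}-\R_{\mathcal{W}_+}\bigr)\omega_+$ from Freed--Uhlenbeck and asserts the conclusions follow. You have merely written out the details the authors leave implicit: the integration by parts, the parallel-section count giving $b_+\leq 3$, and the holonomy reduction to $\mathbb{S}^3_-$ (hyperk\"ahler, hence flat or $K3$) when $b_+\geq 2$.
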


We conclude this introduction with some speculative remarks. It would be natural to try to extend Proposition
\ref{ein} to Ricci solitons. Note that such a result is known for $PIC$ solitons. More generally, it might be interesting to study if a Ricci flow with surgery procedure holds for $PIC_{+}$ manifolds. Note that
$4$-manifolds with positive isotropic curvature have been classified using Ricci flow with surgery
(see \cite{ham} and \cite{cz}).  A similar classification might  yield a diffeomorphism classification of $4$-manifolds with $PIC_{+}$.

\section{Elementary observations about the half-PIC cone}
\subsection{Curvature operators and curvature cones}
\begin{dfn}
  The space of algebraic curvature operators
  $S^2_B\Lambda^2\mathbb{R}^n$ is the space of symmetric endomorphisms
  $\R$ of $\Lambda^2\mathbb{R}^n$ which satisfy the first Bianchi identity:
$$ \ps{\R(x\wedge y)}{z\wedge t}
  +\ps{\R(z\wedge x)}{y\wedge t}+\ps{\R(y\wedge z)}{x\wedge t}=0$$
for all $x,y,z,t\in\mathbb{R}^n$.

\end{dfn}
\begin{rem}
  Here, as in the rest of the paper, $\mathbb{R}^n$ is endowed with
  its standard Euclidean structure, and the scalar product on $\Lambda^2\mathbb{R}^n$ is
  given by:
  \[\ps{x\wedge y}{z\wedge t}=\ps{x}{z}\ps{y}{t}-\ps{x}{t}\ps{y}{z}.\]
  Similarly if $(M,g)$ is a Riemannian manifold, $\Lambda^2TM$
  will be equipped with the inner product coming from the Riemannian metric on $TM$.
\end{rem}

 The Ricci morphism:
$\rho:S^2_B\Lambda^2\mathbb{R}^n\to S^2\mathbb{R}^n$ is defined by
\[\ps{\rho(\R)x}{y}=\sum_{i=1}^n \ps{\R(x\wedge e_i)}{y\wedge e_i}\]
where $(e_i)_{1\leq i\leq n}$ is an orthonormal basis of
$\mathbb{R}^n$. $\R$ is said to be Einstein if $\rho(\R)$ is a
multiple of the identity operator $\id:\mathbb{R}^n\to\mathbb{R}^n$. Similarly, the scalar curvature of an algebraic
curvature operator is just twice its trace.

The action of $O(n,\mathbb{R})$ on $\mathbb{R}^n$ induces the following action of $O(n,\mathbb{R})$ on $S^2_B\Lambda^2\mathbb{R}^n$:
\begin{equation}
\ps{g.\R(x\wedge y)}{z\wedge t}=\ps{\R(gx\wedge gy)}{gz\wedge gt}.\label{eq:action}
\end{equation}

Recall that the representation of $O(n,\mathbb{R})$ given by its
action on $S^2_B\Lambda^2\mathbb{R}^n$ is decomposed into irreducible
representations in the following way:
\begin{equation}
S^2_B\Lambda^2\mathbb{R}^n=\mathbb{R}\Id\oplus
(S^2_0\mathbb{R}^n\wedge\id)\oplus \mathcal{W}\label{eq:decomp}
\end{equation}
where the space of Weyl curvature operators $\mathcal{W}$ is the
kernel of the Ricci endomorphism $\rho$ and $S^2_0\mathbb{R}^n\wedge\id$ is the image of the space of traceless
endomorphims of $\mathbb{R}^n$ under the application $A_0\mapsto
A_0\wedge\id$. The wedge product of two symmetric operators
$A,B:\mathbb{R}^n\to \mathbb{R}^n$ is defined by
\[(A\wedge B)(x\wedge y)=\frac{1}{2}\left ( Ax\wedge By+Bx\wedge
  Ay\right).\]
This corresponds to the half of the Kulkarni-Nomizu product of $A$ and
$B$ viewed as quadratic forms.

In dimension 2, only the first summand of
\eqref{eq:decomp} exists. In dimension $3$ the $\mathcal{W}$
factor is $0$.

Starting in dimension 4, all three components exist. Moreover in
dimension 4, if we restrict the $O(4,\mathbb{R})$ action to an
$SO(4,\mathbb{R})$ action, the decomposition \eqref{eq:decomp} is no
longer irreducible: the Weyl part splits into self-dual and
anti-self-dual parts:
\begin{equation}
S^2_B\Lambda^2\mathbb{R}^4=\mathbb{R}\Id\oplus
(S^2_0\mathbb{R}^4\wedge\id)\oplus \mathcal{W}_+\oplus \mathcal{W}_-.\label{eq:decomp_dim4}
\end{equation}
This comes from the fact that the adjoint representation of $SO(4,\mathbb{R})$
on $\mathfrak{so}(4,\mathbb{R})\simeq\Lambda^2\mathbb{R}^4$ split into
two three dimensional irreducible components, which correspond to
self-dual and anti-self-dual $2$-forms. See the Appendix for more details.

We will write $\R=\R_{\Id}+\R_0+\R_\mathcal{W}$ to denote the
decomposition of a curvature operator along the three
components of \eqref{eq:decomp}. Similarly we will write
$\R=\R_{\Id}+\R_0+\R_{\mathcal{W}_+}+\R_{\mathcal{W}_-}$ for the
decomposition of $\R\in S^2_B\Lambda^2\mathbb{R}^4$ along \eqref{eq:decomp_dim4}.

\begin{dfn}
  An oriented (resp. unoriented) curvature cone is a closed convex cone $\mathcal{C}\subset S^2_B\Lambda^2\mathbb{R}^n$ such that:
  \begin{itemize}
  \item $\mathcal{C}$ is invariant under the action of
    $SO(n,\mathbb{R})$ (resp. $O(n,\mathbb{R})$) given by \eqref{eq:action}.
  \item The identity operator
    $\Id:\Lambda^2\mathbb{R}^n\to\Lambda^2\mathbb{R}^n$ is in the
    interior of $\mathcal{C}$.
  \end{itemize}
\end{dfn}

\begin{rem}
  The condition that $\Id$ is in the interior of $\mathcal{C}$ implies
  that $\mathcal{C}$ has full dimension.
\end{rem}

Each of these cones defines a nonnegativity condition for the curvature
of Riemannian manifold in the following way: the curvature operator
$\R$ of a Riemannian manifold $(M,g)$ is a section of the bundle $S^2_B\Lambda^2TM$
which is built from $TM$  the same way $S^2_B\Lambda^2\mathbb{R}^n$ is
built from $\mathbb{R}^n$. For each $x\in M$, one can choose a
orthonormal basis of $T_xM$ to build an isomorphism between
$S^2_B\Lambda^2T_xM$ and $S^2_B\Lambda^2\mathbb{R}^n$. Thanks to the
$O(n,\mathbb{R})$-invariance of $\mathcal{C}$, this allows us to embed
$\mathcal{C}$ in $S^2_B\Lambda^2T_xM$ in a way which is independent
of the basis of $T_xM$ we started with.

We then say that $(M,g)$ has
$\mathcal{C}$-nonnegative curvature if for any $x\in M$ the curvature
operator of $(M,g)$ at $x$ belongs to the previously discussed
embedding of $\mathcal{C}$ in $S^2_B\Lambda^2T_xM$. Similarly, $(M,g)$
is said to have positive $\mathcal{C}$-curvature if its curvature
operator at each point is in the interior of $\mathcal{C}$. By
definition, the sphere $\mathbb{S}^n$ has positive
$\mathcal{C}$-curvature for all curvature cones $\mathcal{C}$.

\subsection{Ricci flow invariant curvature cones}

Let $Q$ be the quadratic vector field on
$S^2_B\Lambda^2\mathbb{R}^n$ defined by
\[Q(\R)=\R^2+\R^\#.\]
Here, $\R^2$ is just the square of $\R$ seen as an endomorphism of
$\Lambda^2\mathbb{R}^n$.  $\R^\#$ is defined in the following way:
\[\ps{\R^\#\eta}{\eta}=-\frac{1}{2}\sum_{i=1}^{n(n-1)/2}\ps{\left[\eta,\R
    \Bigl (\left[\eta,\R \left
          (\omega_i\right)\right]\Bigr)\right]}{\omega_i}\]
where $(\omega_i)_{i=1\dots n(n-1)/2}$ is an orthonormal basis of
$\Lambda^2\mathbb{R}^n$ and the Lie bracket $[\ ,\ ]$ on $\Lambda^2\mathbb{R}^n$ comes from its
identification with $\mathfrak{so}(n,\mathbb{R})$ given by:
\[ \phi: x\wedge y\mapsto (u\mapsto \ps{x}{u}y-\ps{y}{u}x).\]
This expression for $\R^\#$ can be found in \cite{bw}.

\begin{dfn}
  A curvature cone $\mathcal{C}$ is said to be Ricci flow invariant if for any
  $\R \in \partial\mathcal{C}$ of $\mathcal{C}$,
  $Q(\R)\in T_{\R}\mathcal{C}$, the tangent cone at $\R$ to $\mathcal{C}$.
\end{dfn}
\begin{rem}
  This condition is equivalent to the fact that the solutions to the ODE
  $\frac{d}{dt}\R=Q(\R)$ which start inside $\mathcal{C}$ stay in
  $\mathcal{C}$ for positive times.
\end{rem}

We now discuss the relevance of this notion to  the actual Ricci flow equation. This is the content of Hamilton's maximum principle:

\begin{thm} (R. Hamilton \cite{ham})
  Let $n \ge 2$ and $\mathcal{C} \subset S^2_B\Lambda^2\mathbb{R}^n$ be a Ricci flow invariant curvature cone.
  If $(M^n,g(t))_{t\in[0,T)}$ is a Ricci flow on a compact manifold such
  that $(M,g(0))$ has $\mathcal{C}$-nonnegative curvature, then for
  $t\in[0,T)$, $(M,g(t))$ has $\mathcal{C}$-nonnegative curvature.
\end{thm}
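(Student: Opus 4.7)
The plan is to combine the standard evolution equation of the curvature operator under Ricci flow with Hamilton's maximum principle for sections of vector bundles subject to a reaction--diffusion equation. The first ingredient is the Uhlenbeck trick: since the bundle $S^2_B\Lambda^2 TM$ itself depends on $t$ through the metric $g(t)$, I would fix an auxiliary bundle $E\to M$ isomorphic to $TM$ equipped with a fixed inner product $h$, and consider the family of bundle isomorphisms $u(t)\colon E\to TM$ with $u(0)$ an isometry and evolving by $\partial_t u=\ric(g(t))\circ u$. Then $u(t)$ remains an isometry between $(E,h)$ and $(TM,g(t))$, and the pullback $\widetilde{\R}(t)=u(t)^*\R(g(t))$ is a section of the \emph{time-independent} bundle $S^2_B\Lambda^2 E^*$. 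A standard computation, due to Hamilton, shows that $\widetilde{\R}$ satisfies
\[\ddt{\widetilde{\R}}=\Delta \widetilde{\R}+Q(\widetilde{\R}),\]
where $\Delta$ is the rough Laplacian with respect to $g(t)$ and $Q(\R)=\R^2+\R^\#$.

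Next, since $\mathcal{C}$ is $SO(n,\re)$-invariant, it defines a fiberwise closed convex subset $\mathcal{C}_E\subset S^2_B\Lambda^2 E^*$ that is stable under parallel transport for any metric connection. The hypothesis $\R(g(0))\in\mathcal{C}$ pointwise translates into $\widetilde{\R}(0)\in\mathcal{C}_E$ pointwise, and it suffices to show that $\widetilde{\R}(t)\in\mathcal{C}_E$ for every $t\in[0,T)$. This is a direct application of Hamilton's ODE--PDE maximum principle: the Ricci flow invariance of $\mathcal{C}$ is exactly the hypothesis that $Q$ points into $\mathcal{C}_E$ along $\partial \mathcal{C}_E$. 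The standard proof approximates $\mathcal{C}_E$ by smooth strictly convex subsets and observes that at a potential first touching point of $\widetilde{\R}$ with the moving boundary, the Laplacian contributes nonnegatively (by convexity of $\mathcal{C}_E$ and parallel invariance), while the reaction term $Q$ does so by assumption. A scalar parabolic maximum principle applied to the signed distance function to $\partial\mathcal{C}_E$, together with a standard $\eps$-perturbation to rule out equality, closes the loop.

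The main obstacle is the careful statement and proof of Hamilton's maximum principle for sections of a vector bundle satisfying a reaction--diffusion equation with an invariant convex cone. Once that is in place, the argument is essentially automatic; the only content specific to the Ricci flow is the derivation of the evolution equation for $\widetilde{\R}$ via the Uhlenbeck trick, which is direct but somewhat lengthy. It is worth noting that the theorem does not use the cone hypothesis in any stronger sense than the tangency condition in the definition: the explicit form of $\R^\#$ never enters, only the fact that the ODE $\tfrac{d}{dt}\R=Q(\R)$ preserves $\mathcal{C}$.
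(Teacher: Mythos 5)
The paper does not prove this statement at all: it is quoted as a known theorem of Hamilton (with the convex-cone formulation essentially as in B\"ohm--Wilking and Chow--Lu), so there is no in-paper argument to compare against. Your sketch is the standard and correct route --- the Uhlenbeck trick to transplant $\R$ to a fixed bundle where it satisfies $\partial_t\widetilde{\R}=\Delta\widetilde{\R}+Q(\widetilde{\R})$, followed by Hamilton's vector-bundle maximum principle for a closed, convex, parallel (here: $O(n)$-invariant, hence parallel-transport invariant) fiberwise set preserved by the ODE $\frac{d}{dt}\R=Q(\R)$ --- and you correctly identify that the only place the cone hypothesis enters is through the tangency condition on $\partial\mathcal{C}$. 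The sketch is acceptable as written provided you treat the bundle maximum principle (including the support-function/first-touching argument and the $\eps e^{Ct}$-perturbation needed because $Q$ is only locally Lipschitz) as a citable black box rather than something still to be established.
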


Next we recall Wilking's method for generating Ricci flow invariant cones.  
We begin with the isomorphism $\phi: \wedge^2 \R^n
\rt {\bf so} (n, \R)$ given  by
\begin{equation}\notag
 \phi(u \wedge v)(x)= \langle u,x \rangle v - \langle v, x \rangle u \ \ \ x \in {\mathbb R}^n.
\end{equation}
Under the above identification of
$\Lambda^2\mathbb{R}^n$ with $\mathfrak{so}(n,\re)$ the inner product
on $\mathfrak{so} (n,\re)$ is given by  $\langle A, B \rangle = -
\frac{1}{2}tr (AB)$.  Extend this inner product to a Hermitian form on
$\mathfrak{so} (n,\re) \otimes_\re \C = \mathfrak{so} (n,\C)$. We also
extend any algebraic curvature operator $\R \in S^2(\mathfrak{so} (n,\re))$ to a complex linear map $ \mathfrak{so} (n,\C) \to  \mathfrak{so} (n,\C)$. Denoting the extensions by the same symbols, one has :
\begin{thm}[\cite{wil}]
  Let $S$ be a subset of the complex Lie algebra $\mathfrak{so} (n,\C)$. If
  $S$ is invariant under the adjoint action of the corresponding complex
  Lie group $SO(n,\C)$, then the curvature cone
\[\mathcal{C}(S)= \{ \R \in S^2({\mathfrak{so}(n,\mathbb{R})}) \ \vert \ \ps{\R(X)}{X}\ge 0 \ \text{ for  all}\ X \in S \}\]
is Ricci flow invariant.
\end{thm}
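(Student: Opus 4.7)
The plan is to verify Hamilton's maximum-principle criterion: for every $\R\in\partial\mathcal{C}(S)$ one must show $Q(\R)=\R^2+\R^\#\in T_\R\mathcal{C}(S)$. Since $\mathcal{C}(S)$ is the intersection of the closed half-spaces $\{A:\ps{AX}{X}\geq 0\}$ indexed by $X\in S$, elementary convex analysis identifies $T_\R\mathcal{C}(S)$ with the intersection of the active half-spaces. Hence it suffices to prove
\[ \ps{Q(\R)X}{X}\geq 0 \qquad \text{whenever }X\in S,\ \ps{\R X}{X}=0. \]
Splitting $Q(\R)=\R^2+\R^\#$, the first summand contributes $|\R X|^2\geq 0$ for free; the real work is to control $\ps{\R^\#X}{X}$ using the active hypothesis.

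The variational input comes from the $SO(n,\mathbb{C})$-invariance of $S$: for every $\xi\in\mathfrak{so}(n,\mathbb{C})$, the conjugation curve $X(s)=e^{s\xi}Xe^{-s\xi}$ lies in $S$, and the real-valued nonnegative function $s\mapsto\ps{\R X(s)}{X(s)}$ vanishes at $s=0$. Its first variation, after substituting $\xi\mapsto i\xi$ to eliminate real-part ambiguities, yields
\[ \ps{\R[\xi,X]}{X}=0 \qquad \forall\,\xi\in\mathfrak{so}(n,\mathbb{C}), \]
which, combined with the $\ad$-invariance of the Hermitian form on $\mathfrak{so}(n,\mathbb{C})$, encodes a bracket relation between $X$ and $\R X$. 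The second variation gives, for every $\xi$,
\[ \mathrm{Re}\,\ps{\R[\xi,[\xi,X]]}{X}+\ps{\R[\xi,X]}{[\xi,X]}\geq 0. \]

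The final step is to translate these identities into the bound $\ps{Q(\R)X}{X}\geq 0$. Starting from the defining formula $\ps{\R^\#\eta}{\eta}=-\tfrac{1}{2}\sum_i\ps{[\eta,\R([\eta,\R\omega_i])]}{\omega_i}$, one uses the first Bianchi identity for $\R$ (this is where $\R\in S^2_B\Lambda^2$ is essential) together with the Jacobi identity for the bracket to reorganise $\ps{Q(\R)X}{X}$: summing the second-variation inequality over a suitable orthonormal basis makes the Casimir $\sum_\alpha\ad_{\omega_\alpha}^{2}$ of the adjoint representation act as a scalar and absorb $\ps{\R X}{X}=0$, the first-variation bracket relation kills the remaining cross-terms, and what is left matches $|\R X|^2+\ps{\R^\#X}{X}$ up to manifestly nonnegative squares.

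The main obstacle is this last algebraic step. The contraction defining $\R^\#$ is structurally different from the one produced by the second variation --- the former involves two copies of $\R$ sandwiched between brackets, while the latter naively produces only one --- so matching them requires simultaneous use of Jacobi, the first Bianchi identity, and the full $SO(n,\mathbb{C})$-invariance: it is essential that $\xi$ be allowed to range over \emph{complex} skew-symmetric matrices, not only real ones, and that the first variation be read off in its complex-linear (rather than merely real) form. A straightforward contraction over a real orthonormal basis of $\mathfrak{so}(n,\mathbb{R})$ does not by itself recover $\R^\#$ in closed form, which is precisely why the theorem is stated over the complexified Lie algebra.
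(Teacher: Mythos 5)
First, note that the paper does not prove this statement: it is quoted verbatim from Wilking \cite{wil}, so the only meaningful comparison is with Wilking's published argument. Your setup is indeed his: reduce Ricci flow invariance to the pointwise inequality $\ps{Q(\R)X}{X}\ge 0$ at active constraints $X\in S$, split off $\ps{\R^2X}{X}=|\R X|^2\ge 0$, and exploit the $\mathrm{Ad}_{SO(n,\C)}$-invariance of $S$ through the first and second variations of $s\mapsto\ps{\R\,\mathrm{Ad}_{e^{s\xi}}X}{\mathrm{Ad}_{e^{s\xi}}X}$ at the minimum $s=0$. The first-variation identity (strengthened by the substitution $\xi\mapsto i\xi$) and the second-variation inequality you record are both correct.

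The gap is exactly where you place the ``main obstacle,'' and the mechanism you propose to close it cannot work. In an eigenbasis $(v_\alpha)$ of $\R$ with eigenvalues $\lambda_\alpha$, the ad-invariance of the form gives
\[
\ps{\R^\#X}{X}=\tfrac12\sum_\alpha\lambda_\alpha\,\ps{\R[v_\alpha,X]}{[v_\alpha,X]}
=\tfrac12\sum_{\alpha,\beta}\lambda_\alpha\lambda_\beta\,\bigl|\ps{X}{[v_\alpha,v_\beta]}\bigr|^2 ,
\]
so recovering $\R^\#$ from the second-variation inequalities requires summing them with the weights $\lambda_\alpha$ — and these may be negative, so the inequalities cannot simply be added. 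The unweighted contraction over an orthonormal basis that you describe produces $\sum_\alpha\ps{\R[\omega_\alpha,X]}{[\omega_\alpha,X]}\ge 0$ (with the Casimir term vanishing because $\ps{\R X}{X}=0$), which is a true but different inequality and does not bound $\ps{\R^\#X}{X}$. Controlling the mixed-sign terms $\lambda_\alpha\lambda_\beta<0$ is the actual content of Wilking's proof; it uses carefully chosen complex directions $\xi$ (built from the eigenvectors and from the commutation relation $[\overline{X},\R X]=0$ that the full complex first variation delivers), none of which appears in your sketch. A further warning sign is your appeal to the first Bianchi identity as ``essential'': the theorem, as stated in the paper, is for arbitrary $\R\in S^2(\mathfrak{so}(n,\re))$, with no Bianchi hypothesis, and Wilking's proof never uses it. So the skeleton is right but the decisive algebraic step — the only nontrivial part of the theorem — is missing, and the route you indicate toward it is a dead end. (The identification of $T_{\R}\mathcal{C}(S)$ with the intersection of the active half-spaces over the noncompact index set $S$ also deserves a sentence of justification, but that is a minor, standard point.)
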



We also have  Wilking's generalization of the Brendle-Schoen strong maximum principle \cite{bs2} (which is based on a maximum principle of J. M. Bony \cite{bon}): 

\begin{thm}[\cite{wil}]
  Let $S$ be an $Ad_{SO(n,\C)}$ invariant subset of $\mathfrak{so}(n,\C)$
  and $(M,g)$ be a compact $n$-manifold with nonnegative
  $\mathcal{C}(S)$-curvature.  Let $g(t)$ be the solution to Ricci flow starting at
  $g$. For $p \in M$ and $t >0$, let $S_t(p) \subset \wedge^2T_p(M)\otimes_\re\C$
  be the subset corresponding to $S$ at time $t$ i.e., $S_t(p) =
  \rho_{g(t)}^{-1}(S)$ and let
\[T_t(p):=  \{ x \in S_t(p) \ : \ \langle \R(t)(x), x \rangle _t =0 \}.\]
Then the set $\bigcup_{p \in M} T_t(p)$ is invariant under parallel
transport.
\end{thm}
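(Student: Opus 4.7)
The plan is to adapt the Brendle-Schoen strong maximum principle \cite{bs2} (itself built on J.~M.~Bony's maximum principle \cite{bon} for degenerate elliptic operators) to the general Wilking setting. After passing to an Uhlenbeck gauge to eliminate Ricci rotation terms, I construct a nonnegative scalar function $f$ on a space-time neighborhood with $f(p_0, t_0) = 0$, derive a degenerate parabolic differential inequality for $f$ from the Ricci flow invariance of $\mathcal{C}(S)$, and conclude via Bony's theorem that $f \equiv 0$ locally. This translates back into the statement that parallel transport of a null direction stays null.

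Concretely, pick a time-independent bundle $E$ with a fixed inner product together with bundle isometries $u(t): E \to (TM, g(t))$. Under this identification, the $Ad_{SO(n,\C)}$-invariance of $S \subset \mathfrak{so}(n,\C)$ lifts to a well-defined, time-independent subbundle $\mathcal{S} \subset \Lambda^2 E \otimes \C$, and $\R(t)$ satisfies the clean evolution $\partial_t \R = \Delta \R + 2 Q(\R)$. Fix $t_0 > 0$, $p_0 \in M$, and $x_0 \in T_{t_0}(p_0)$. Extend $x_0$ to a section $x(p)$ on a small geodesic ball $U$ around $p_0$ by radial parallel transport with respect to $g(t_0)$; by $Ad$-invariance of $S$, $x(p) \in \mathcal{S}_p$ for all $p \in U$. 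Set
\[ f(p,t) := \langle \R(t) x(p), x(p) \rangle, \]
so that $f \geq 0$ and $f(p_0, t_0) = 0$.

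The core of the argument is to produce a parabolic differential inequality for $f$ on $U \times (t_0 - \varepsilon, t_0]$. A direct computation at $(p_0, t_0)$, using $\nabla x(p_0) = 0$, gives $(\partial_t - \Delta) f(p_0, t_0) = 2\langle Q(\R) x_0, x_0 \rangle$ plus curvature corrections from $\nabla^2 x$; the main term is $\geq 0$ by Ricci flow invariance of $\mathcal{C}(S)$ at the null direction $x_0$. To promote this pointwise inequality to a full neighborhood one needs the quantitative form
\[ \langle Q(\R) y, y \rangle \geq -C \langle \R y, y \rangle, \qquad y \in \mathcal{S},\ \R \in \mathcal{C}(S), \]
on bounded sets. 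This follows from the convexity of $\mathcal{C}(S)$ and the $Ad$-invariance of $S$, which together make the null cone depend Lipschitz-continuously on $\R$. Combined with lower-order terms arising from $\nabla x$ and $\nabla^2 x$ away from $p_0$, this yields $(\partial_t - \Delta) f \geq -C_1 f - C_2 |\nabla f|$ throughout $U \times (t_0 - \varepsilon, t_0]$.

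Bony's strong maximum principle then forces $f \equiv 0$ on a backward parabolic neighborhood of $(p_0, t_0)$. Restricting to $t = t_0$ shows $x(p) \in T_{t_0}(p)$ for $p$ near $p_0$, and a standard open--closed argument along any smooth curve from $p_0$ upgrades this to parallel invariance of $\bigcup_{p \in M} T_{t_0}(p)$. The main obstacle is the quantitative bound $\langle Q(\R) y, y \rangle \geq -C \langle \R y, y \rangle$: for Brendle-Schoen's PIC cone it was established by explicit tensorial computation, whereas Wilking's contribution is to extract it abstractly from the $Ad_{SO(n,\C)}$-invariance of $S$ and the convex structure of $\mathcal{C}(S)$.
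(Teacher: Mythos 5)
First, a point of comparison: the paper offers no proof of this statement --- it is quoted from Wilking \cite{wil} --- so the only meaningful benchmark is Wilking's own argument and the Brendle--Schoen antecedent \cite{bs2} that it generalizes. Your overall architecture (Uhlenbeck gauge, a nonnegative function vanishing at $(p_0,t_0)$, a degenerate parabolic inequality, Bony's maximum principle, then an open--closed chaining along paths) is the correct skeleton.

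The genuine gap is exactly the estimate you yourself flag as ``the main obstacle'': $\ps{Q(\R)y}{y}\ge -C\ps{\R y}{y}$ for $y\in S$ and $\R\in\mathcal{C}(S)$. You assert that it follows from convexity of the cone and Lipschitz dependence of the null set, but this is the entire content of the theorem and the assertion is not substantiated; moreover, as stated the inequality is stronger than what the structure actually delivers. The term $\ps{\R^2y}{y}=|\R y|^2$ is harmless, but the only handle on $\ps{\R^{\#}y}{y}$ is the nonnegativity of the orbit function $h(g)=\ps{\R\,\mathrm{Ad}_g y}{\mathrm{Ad}_g y}$ on $SO(n,\C)$: expressing $\ps{\R^{\#}y}{y}$ through the Hessian and first derivatives of $h$ at $g=e$, the first-derivative contributions are controlled only by $C\sqrt{h(e)}=C\sqrt{\ps{\R y}{y}}$, since a nonnegative function with bounded Hessian and value $\eps$ has gradient of size $O(\sqrt{\eps})$. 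A differential inequality of the form $(\partial_t-\Delta)f\ge -C\sqrt{f}-C|\nabla f|$ is useless for a strong maximum principle (the function $f(x)=x^4$ satisfies $f''=12\sqrt{f}$, vanishes at $0$, and is not identically zero), so the function $f$ you define on the base manifold cannot close the argument. The resolution in \cite{wil} and \cite{bs2} is structural rather than quantitative: one works on the total space of the bundle whose fibre is the adjoint orbit of $S$ (respectively the frame or Grassmannian bundle), uses a degenerate elliptic operator that contains second derivatives in the fibre directions in addition to the horizontal Laplacian, and thereby absorbs the offending first-derivative-along-the-orbit terms into the $|\nabla u|$ of Bony's hypothesis \emph{on the total space}; invariance under parallel transport then follows because the horizontal directions are characteristic for that operator. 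Your fixed radial-parallel section over $M$ has no access to the fibre derivatives, so the loss cannot be absorbed. This step --- not the gauge-fixing, not the chaining --- is where the proof actually lives, and it is missing.
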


\subsection{The half-PIC cone}

\label{sec:elem-observ-about}

\begin{prop} \label{dc}
$\R \in S^2_B\Lambda^2\mathbb{R}^n$ is $PIC_+$ if and only if the symmetric operator on
$\Lambda^2_+\mathbb{R}^4$ defined by the quadratic form $\ps{\R\eta}{\eta}$ is
2-positive. This is also equivalent to the condition
$\tfrac{\scal}{6}-\R_{\mathcal{W}_+}> 0$.
\end{prop}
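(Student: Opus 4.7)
The plan is to express $PIC_+$ as a 2-positivity condition on the self-adjoint operator that $\R$ induces on $\Lambda^2_+\mathbb{R}^4$, and then to exploit the three-dimensionality of $\Lambda^2_+\mathbb{R}^4$ to rewrite it in the form $\frac{\scal}{6}\Id-\R_{\mathcal{W}_+}>0$.

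The first task is to identify $S_0\cap(\Lambda^2_+\mathbb{R}^4\otimes\C)$ with the null cone of the complex-bilinear form on $\Lambda^2_+\mathbb{R}^4\otimes\C$. Writing $\ps{\cdot}{\cdot}_\C$ for the complex-bilinear extension of the real inner product, I want to show that a nonzero $\eta\in\Lambda^2_+\mathbb{R}^4\otimes\C$ lies in $S_0$ iff $\ps{\eta}{\eta}_\C=0$. The ``only if'' direction is the Gram-determinant identity: if $\eta=v\wedge w$ with $\{v,w\}$ spanning an isotropic plane, then $\ps{v\wedge w}{v\wedge w}_\C=\ps{v}{v}_\C\ps{w}{w}_\C-\ps{v}{w}_\C^2=0$. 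For the converse, decompose $\eta=x+iy$ with $x,y\in\Lambda^2_+\mathbb{R}^4$ real; the condition $\ps{\eta}{\eta}_\C=0$ becomes $|x|=|y|$ and $\ps{x}{y}=0$. Using the transitive action of $SO(4)$ on orthonormal frames of $\Lambda^2_+\mathbb{R}^4$, I bring $(x/|x|,y/|y|)$ to the standard pair $(\omega_1^+/\sqrt 2,\omega_2^+/\sqrt 2)$; a direct calculation then rewrites $\eta$, up to scaling, as $(e_1+ie_4)\wedge(e_2+ie_3)$, whose factors visibly span an isotropic plane.

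Once $S_0\cap(\Lambda^2_+\mathbb{R}^4\otimes\C)$ is identified with the null cone, let $A$ be the self-adjoint endomorphism of $\Lambda^2_+\mathbb{R}^4$ representing the quadratic form $\eta\mapsto\ps{\R\eta}{\eta}$. Expanding the Hermitian pairing for $\eta=x+iy$ and using the symmetry of $\R$, the cross terms cancel and one obtains $\ps{\R\eta}{\eta}=\ps{Ax}{x}+\ps{Ay}{y}$. Thus $PIC_+$ is equivalent to $\ps{Ax}{x}+\ps{Ay}{y}>0$ for every orthonormal pair $(x,y)$ in the three-dimensional space $\Lambda^2_+\mathbb{R}^4$, and the Courant-Fischer minimax principle identifies the infimum of this sum as $\lambda_1+\lambda_2$. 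Hence $PIC_+$ is exactly 2-positivity of $A$.

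For the last equivalence, I note that on a three-dimensional inner product space any self-adjoint $A$ with eigenvalues $\lambda_1\leq\lambda_2\leq\lambda_3$ is 2-positive iff $(\tr A)\Id-A>0$, since the eigenvalues of $(\tr A)\Id-A$ are the pairwise sums $\lambda_i+\lambda_j$ whose minimum is $\lambda_1+\lambda_2$. The decomposition \eqref{eq:decomp_dim4} shows that the traceless-Ricci summand $\R_0\in S^2_0\mathbb{R}^4\wedge\id$ interchanges $\Lambda^2_+$ and $\Lambda^2_-$ and hence restricts to zero on $\Lambda^2_+$; therefore $A=\frac{\scal}{12}\Id_{\Lambda^2_+}+\R_{\mathcal{W}_+}$, $\tr A=\frac{\scal}{4}$, and $(\tr A)\Id-A=\frac{\scal}{6}\Id-\R_{\mathcal{W}_+}$. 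The main obstacle is the first identification: one must confirm that self-dual decomposable null 2-forms really correspond to isotropic complex planes, a dimension-four feature (they parametrize the family of $\alpha$-planes in twistor language). Once this is in hand, the remaining arguments are standard linear algebra on $\Lambda^2_+\mathbb{R}^4$.
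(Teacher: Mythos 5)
Your proof is correct and follows essentially the same route as the paper: both identify elements of $S_0\cap(\Lambda^2_+\mathbb{R}^4\otimes\C)$ with pairs of orthogonal equal-norm self-dual forms (you via the null-cone characterization, the paper by directly parametrizing with oriented orthonormal bases of $\re^4$), reduce $PIC_+$ to 2-positivity of the compression of $\R$ to $\Lambda^2_+$, and then convert 2-positivity into $\tfrac{\scal}{6}\Id-\R_{\mathcal{W}_+}>0$ using the tracelessness of $\R_{\mathcal{W}_+}$ on the three-dimensional space $\Lambda^2_+\mathbb{R}^4$. Your version just fills in a few steps the paper leaves implicit (surjectivity of the normal form and the vanishing of the $\R_0$-block on $\Lambda^2_+$).
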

As a corollary,
we get that a $PIC_+$ curvature operator has positive scalar curvature and
a $NNIC_+$ curvature operator with zero scalar curvature has vanishing
$\R_{\mathcal{W}_+}$. Similar statements hold for $PIC_-$ and $NNIC_-$.
\begin{proof}
  This comes from Micallef and Moore \cite{mm}. Let $\omega=(e_1+ie_2)\wedge
  (e_3+ie_4)\in\Lambda^2\mathbb{C}^4$ where $(e_i)_{i=1\dots 4}$ is a
  oriented orthonormal basis of $\mathbb{R}^4$. Then :
  \begin{align*}
    \ps{\R\omega}{\omega}=&\ps{\R(e_1\wedge e_3-e_2\wedge e_4)}{e_1\wedge e_3-e_2\wedge e_4}\\
&+\ps{\R(e_1\wedge e_4+e_2\wedge e_3)}{e_1\wedge e_4+e_2\wedge e_3}.
  \end{align*}
  Since $e_1\wedge e_3-e_2\wedge e_4$ and $e_1\wedge e_4+e_2\wedge
  e_3$ are in $\Lambda^2_+\mathbb{R}^4$ and any pair of orthonormal
  2-forms in $\Lambda^2_+\mathbb{R}^4$ can be written in this way for
  some oriented orthonormal basis of $\re^4$, this shows that $\R$ is $PIC_+$ if and only
  if the quadratic form $\eta \mapsto\ps{\R\eta}{\eta}$ on $\Lambda^2_+\mathbb{R}^4$ is
  2-positive.

  To see the second equivalence, first notice that
  $\ps{\R\eta}{\eta}=\ps{(\R_{\Id}+\R_{\mathcal{W}_+})\eta}{\eta}$
  for any $\eta \in \Lambda^2_+\mathbb{R}^4$. So $\R$ is $PIC_+$ if and
  only if $\tfrac{\scal}{12}+\R_{\mathcal{W}_+}$ is 2-positive. Let
  $\lambda_+\leq\mu_+\leq\nu_+$ be the eigenvalues of
  $\R_{\mathcal{W}_+}$. $\tfrac{\scal}{12}+\R_{\mathcal{W}_+}$ is
  2-positive if and only if $(\tfrac{\scal}{12}+\mu_+)+(\tfrac{\scal}{12}+\lambda_+)>0$. The
  tracelessness of $\R_{\mathcal{W}_+}$ implies that $\R$ is $PIC_+$
  if and only if : $\tfrac{\scal}{6}-\nu_+>0$, which is exactly the
  condition $\tfrac{\scal}{6}-\R_{\mathcal{W}_+}>0$.
\end{proof}

\begin{prop}\label{pre}
  The cone $\mathcal{C}_{IC_+}=\{\R\in S^2_B\Lambda^2\mathbb{R}^4\ |\
  \text{$\R$ is $NNIC_+$}\}$ is a Ricci flow invariant curvature cone.
\end{prop}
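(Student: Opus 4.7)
The plan is to realize $\mathcal{C}_{IC_+}$ as an instance of Wilking's construction $\mathcal{C}(S)$, and then invoke Wilking's criterion to obtain Ricci flow invariance. The natural choice is
\[S := \bigl(\Lambda^2_+\re^4\otimes_\re\C\bigr)\cap S_0 \;\subset\; \Lambda^2\C^4 \cong \mathfrak{so}(4,\C),\]
where $S_0\subset\Lambda^2\C^4$ is the set of decomposable isotropic complex $2$-forms from the definition of $PIC_+$. By the very definition of $NNIC_+$, one then has $\mathcal{C}_{IC_+}=\mathcal{C}(S)$, so once $S$ is shown to be $Ad_{SO(4,\C)}$-invariant and $\mathcal{C}_{IC_+}$ verified to be a curvature cone, Ricci flow invariance is immediate from Wilking's theorem.

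Before invoking Wilking, I would first check that $\mathcal{C}_{IC_+}$ is in fact a curvature cone. Closedness and convexity are immediate from its description as an intersection of closed half-spaces $\{\R:\ps{\R\omega}{\omega}\geq 0\}$ indexed by $\omega\in S$. The $SO(4,\re)$-invariance will follow \emph{a fortiori} from the stronger $Ad_{SO(4,\C)}$-invariance of $S$ established in the next step, since $SO(4,\re)\subset SO(4,\C)$. Finally, $\Id$ lies in the interior because $\ps{\Id\,\omega}{\omega}=\|\omega\|^2$ is bounded below by a positive constant on the compact unit sphere of $S$, so any sufficiently small perturbation of $\Id$ still satisfies the defining inequalities of $\mathcal{C}_{IC_+}$.

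The central step is to prove that $S$ is $Ad_{SO(4,\C)}$-invariant. Any $g\in SO(4,\C)$ preserves the complex bilinear extension of the Euclidean inner product, hence maps complex isotropic planes to complex isotropic planes, so $S_0$ is $g$-invariant. To see that $\Lambda^2_+\C^4$ is also $g$-invariant, one notes that $\det g = 1$ implies $g$ preserves the complex volume form, so the $\C$-linear extension of the Hodge star satisfies $g\circ * = *\circ g$ on $\Lambda^2\C^4$; its $\pm1$ eigenspaces are therefore each $g$-stable. Combining these two facts shows $S$ is invariant, and Wilking's criterion then yields Ricci flow invariance of $\mathcal{C}_{IC_+}$.

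The step I expect to be most delicate is the $Ad_{SO(4,\C)}$-invariance of $\Lambda^2_+\C^4$: the self-dual/anti-self-dual decomposition is only preserved by orientation-preserving transformations, and connectedness of $SO(n,\C)$ is precisely what allows the argument to go through. This is also the structural reason that $NNIC_+$ is only an \emph{oriented} curvature cone, since an orientation-reversing element of $O(4,\re)$ would swap $\Lambda^2_\pm$ and hence interchange $NNIC_+$ with $NNIC_-$.
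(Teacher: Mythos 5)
Your proof is correct and follows essentially the same route as the paper: both realize $\mathcal{C}_{IC_+}$ as a Wilking cone $\mathcal{C}(S)$ for a suitable $Ad_{SO(4,\C)}$-invariant subset $S\subset\Lambda^2_+\C^4$ and invoke Wilking's theorem (the paper describes $S$ by the equivalent condition $\tr(\omega^2)=0$ on $\Lambda^2_+\C^4$ and emphasizes the equivalence with $2$-nonnegativity on $\Lambda^2_+\re^4$). Your explicit verification of the $Ad_{SO(4,\C)}$-invariance of $S$, via preservation of the complex bilinear form and commutation with the complexified Hodge star, is a detail the paper leaves implicit, and is correct.
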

\begin{proof}
  We remark that :
  \[\mathcal{C}_{IC_+}=\{\R\ |\ \forall \omega\in S,\
  \ps{\R\omega}{\omega}\geq 0\}\]
  with $S=\{\omega\in\mathfrak{so}(4,\mathbb{C})\ |\ \tr(\omega^2)=0,\ \omega\in\Lambda^2_+\mathbb{C}^4\}$. This comes from the fact that $\omega=\omega_1+i\omega_2\in S$ if and only if $|\omega_1|^2=|\omega_2|^2=1$ and $\ps{\omega_1}{\omega_2}=1$, and thus $\R$ is nonnegative on $S$ if and only if it is 2-nonnegative on $\Lambda^2_{+}\mathbb{R}^4$. Thus $\mathcal{C}$ is a Wilking
  cone and is therefore Ricci flow invariant (see \cite{wil}).
\end{proof}
\begin{rem}
  This result contradicts Theorem 1.1 in
  \cite{gms}, which says that any Wilking cone is
  contained in the cone of curvature operators with nonnegative
  isotropic curvature. The proof is however valid in dimension $n\geq
  5$. The point is that it uses (p. 4 of \cite{gms})
  the simplicity of the Lie algebra $\mathfrak{so}(n,\mathbb{C})$,
  which holds only if $n\neq 4$. However, on should notice that the
  cone $\mathcal{C}_{IC_+}$ is only $SO(4,\mathbb{R})$-invariant. We
  will prove in Section \ref{sec:half-pic-as}, that, regardless of the
  Wilking condition, the cone of curvature operators with nonnegative
  isotropic curvature is maximal among $O(4,\mathbb{R})$-invariant Ricci flow invariant cones
  strictly smaller than the cone of operators with nonnegative scalar curvature.
\end{rem}
The cone $\mathcal{C}_{IC_-}$ is defined similarly and is also Ricci
flow invariant. Moreover, we have that
$\mathcal{C}_{IC_+}\cap \mathcal{C}_{IC_-}$ is the cone of curvature
operators with nonnegative isotropic curvature.

An oriented $4$-manifold $(M,g)$ is said to be positive K\"ahler if $g$
is K\"ahler
for some almost complex structure $J$ and the orientation of $M$
induced by $J$ coincide with the given orientation of $M$. This is
equivalent to  the K\"ahler form being a section of the bundle
$\Lambda^2_+T^*M$. Similarly $(M,g)$ is
negative K\"ahler if $g$ is K\"ahler
for some almost complex structure $J$ and the orientation of $M$
induced by $J$ is the opposite of the given orientation of
$M$. With these condition, $\mathbb{CP}^2$ is positive K\"ahler.

\begin{prop}\label{prop:kaehlerPIC+}
  Any positive K\"ahler manifold with nonnegative scalar curvature is
  $NNIC_+$ but not $PIC_+$. A $NNIC_-$ positive K\"ahler manifold is biholomorphic to $\mathbb{CP}^2$.
\end{prop}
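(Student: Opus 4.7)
The first assertion is pointwise, and my plan is to identify the eigenvalues of $\R_{\mathcal{W}_+}$ on a positive K\"ahler 4-manifold. A standard computation using the K\"ahler condition $[\R,J]=0$ together with the first Bianchi identity shows that the unit-normalized K\"ahler form $\omega\in\Lambda^2_+$ is an eigenvector of $\R_{\mathcal{W}_+}$ with eigenvalue $s/6$. The $U(2)$-stabilizer of $J$ acts on the orthogonal complement $\omega^\perp\subset\Lambda^2_+$ (identified with $(\Lambda^{2,0}\oplus\Lambda^{0,2})_{\re}$) through a transitive $SO(2)$-rotation, so Schur's lemma forces $\R_{\mathcal{W}_+}|_{\omega^\perp}$ to be a scalar multiple of the identity, and tracelessness of $\R_{\mathcal{W}_+}$ pins this scalar at $-s/12$. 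Thus the largest eigenvalue of $\R_{\mathcal{W}_+}$ equals $s/6$ exactly, and Proposition \ref{dc} gives $NNIC_+$ (with equality along $\mathbb{R}\omega$, so never $PIC_+$) as soon as $s\geq 0$.

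For the second assertion, I would assume $M$ is compact and $s$ positive somewhere. Since positive K\"ahler is $NNIC_+$ by the first part, combining with the hypothesis $NNIC_-$ and the identity $\mathcal{C}_{IC_+}\cap\mathcal{C}_{IC_-}=\mathcal{C}_{NNIC}$ noted just before the proposition puts $\R$ in the classical Micallef-Moore $NNIC$ cone. The plan is to compute the Betti numbers of $M$ and invoke the classification of compact complex surfaces.

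The standard Bochner identity for holomorphic $(2,0)$-forms on a compact K\"ahler surface with $s$ positive somewhere forces $h^{2,0}=0$, giving $b_+(M)=1+2h^{2,0}=1$ (spanned by $[\omega]$). For $b_-$, the Weitzenböck formula applied to a harmonic $\eta\in\Lambda^2_-$ reads
\[\int_M |\nabla\eta|^2\,dV = \int_M\Bigl(2\ps{\R_{\mathcal{W}_-}\eta}{\eta}-\tfrac{s}{3}|\eta|^2\Bigr)dV,\]
and $NNIC_-$ (equivalently $2\R_{\mathcal{W}_-}\leq\tfrac{s}{3}\Id$ on $\Lambda^2_-$) makes the right-hand side nonpositive. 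Hence $\eta$ is parallel with $\R_{\mathcal{W}_-}\eta = \tfrac{s}{6}\eta$ wherever $\eta\neq 0$. A parallel nonzero primitive $(1,1)$-form on a K\"ahler surface defines, after normalization, a parallel almost complex structure $\tilde J$ anticommuting with $J$, so $(M,g)$ carries a global hyperk\"ahler structure and is in particular Ricci-flat. This contradicts $s>0$ somewhere, forcing $b_-=0$. Then $\chi(M)=3$, $\tau(M)=1$, $c_1^2(M)=9$, and the Enriques-Kodaira classification identifies $M$ biholomorphically with $\mathbb{CP}^2$.

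The main obstacle is the rigidity step that rules out parallel anti-self-dual forms: the Weitzenböck estimate only delivers parallelism, not vanishing, so one must translate a parallel primitive $(1,1)$-form into an anticommuting complex structure and hence a hyperk\"ahler reduction of holonomy, which then clashes with positive scalar curvature somewhere. The final classification of surfaces with the computed topological invariants, though standard, is the only non-elementary input.
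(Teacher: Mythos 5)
Your treatment of the first assertion reaches the right conclusion and essentially re-derives Derdzi\'nski's computation of the spectrum of $\R_{\mathcal{W}_+}$ on a positive K\"ahler surface (eigenvalues $\tfrac{\scal}{6},-\tfrac{\scal}{12},-\tfrac{\scal}{12}$), which is exactly the result the paper cites. One caveat: the Schur's lemma step is not justified as stated, since the curvature at a point is not a priori equivariant under the full pointwise stabilizer $U(2)$ of $J$; the clean pointwise argument is that $J$-invariance of the K\"ahler curvature tensor forces $\R$ to annihilate the real $(2,0)+(0,2)$-forms, which pins the two eigenvalues $-\tfrac{\scal}{12}$ directly and gives $\tfrac{\scal}{6}$ on $\re\omega$ by tracelessness.

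The second assertion is where the proposal genuinely breaks. The Weitzenb\"ock step is fine: a harmonic $\eta\in\Lambda^2_-$ on a compact $NNIC_-$ manifold is parallel. But the rigidity step is wrong. A parallel unit-length section of $\Lambda^2_-$ defines a parallel almost complex structure $\tilde J$ inducing the \emph{opposite} orientation; in the quaternionic picture of the appendix, $J$ corresponds to a right multiplication and $\tilde J$ to a left multiplication, so they \emph{commute} rather than anticommute. The parallel operator $J\tilde J$ is then a symmetric involution with two $2$-dimensional eigenbundles, so what you actually obtain is a local de Rham splitting into two surfaces --- not a hyperk\"ahler, hence Ricci-flat, structure --- and there is no contradiction with $\scal>0$. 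Indeed $\mathbb{S}^2\times\mathbb{S}^2$ with the product metric defeats the argument outright: it is positive K\"ahler, it is $NNIC_-$ (it is also negative K\"ahler, so apply your first assertion with the orientation reversed), it has positive scalar curvature, and $b_-=1$ with parallel harmonic anti-self-dual form $\omega_1-\omega_2$. So the conclusion $b_-=0$ is false under the hypotheses you are using, and the gap is not a repairable slip within your framework: the locally reducible case must be excluded by hypothesis. The paper's proof sidesteps all of this by observing that $NNIC_-$ plus the first assertion places $\R$ in the $NNIC$ cone and then quoting the classification in \cite{ses} of compact \emph{locally irreducible} K\"ahler manifolds with nonnegative isotropic curvature; your computation in effect rediscovers why that irreducibility hypothesis is indispensable (and why compactness and $\scal>0$ somewhere must also be assumed, since a flat complex torus satisfies every stated hypothesis).
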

\begin{proof}
  A theorem of Derdzi\'nski (see \cite{der}) shows that if $(M^4,g)$ is positive K\"ahler and of
  real dimension 4, then
  the eigenvalues of $\R_{\mathcal{W}_+}$ are $\tfrac{\scal}{6}$,
  $-\tfrac{\scal}{12}$ and $-\tfrac{\scal}{12}$, so
  $\tfrac{\scal}{6}-\R_{\mathcal{W}_+}$ is nonnegative (with a zero
  eigenvalue). Hence $(M^4,g)$ is $NNIC_+$ and not $PIC_+$.

  If in addition $(M^4,g)$ is $NNIC_-$, then $(M^4,g)$ is K\"ahler and
  has nonnegative isotropic curvature, earlier results of one of the
  authors (\cite{ses})
  imply that $M^4$ is actually biholomorphic to $\mathbb{CP}^2$.
\end{proof}
\section{Half-PIC as a maximal Ricci flow invariant curvature condition}
\label{sec:half-pic-as}

In this section we prove:
\begin{thm}\label{sec:thm_cones}
Let $\mathcal{C} \subsetneq \mathcal{C}_{\scal} \subset S_B^2 \Lambda ^2 \re^4$ be an oriented Ricci flow invariant curvature cone. $\mathcal{C}$ is then contained in one of the cones $PIC_+$ or
$PIC_-$. Moreover, if $\mathcal{C}$ is an unoriented curvature cone, it is contained in the $PIC$ cone.
\end{thm}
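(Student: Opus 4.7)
My plan is to reduce this $20$-dimensional problem to a pair of $6$-dimensional ones by an averaging argument, then to analyze the smaller problem using a supporting hyperplane at a boundary point together with Ricci flow invariance. For the reduction, I would use the local isomorphism $SO(4,\mathbb{R}) \cong (SU(2)_L \times SU(2)_R)/\mathbb{Z}_2$. The subgroup $SU(2)_R$ fixes $\Id$ and $\mathcal{W}_+$ pointwise, while the traceless Ricci summand $S^2_0\mathbb{R}^4 \wedge \id$ decomposes as three copies of the adjoint of $SU(2)_R$ and $\mathcal{W}_-$ is the spin-$2$ representation of $SU(2)_R$; neither has nonzero $SU(2)_R$-invariants. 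Hence the $SU(2)_R$-average of any $\R \in \mathcal{C}$ equals $\tfrac{\scal(\R)}{12}\Id + \R_{\mathcal{W}_+}$, which lies in $\mathcal{C}$ by its closedness, convexity, and $SO(4,\mathbb{R})$-invariance. Since the $PIC_+$ condition depends only on $\scal$ and $\R_{\mathcal{W}_+}$, the inclusion $\mathcal{C} \subset \mathcal{C}_{IC_+}$ is equivalent to $\mathcal{C}\cap V_+ \subset \mathcal{C}_{IC_+}\cap V_+$, where $V_+ := \mathbb{R}\Id \oplus \mathcal{W}_+$, and analogously for $V_-$.

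Next, since $V_+$ is exactly the $SU(2)_R$-invariant subspace of $S^2_B\Lambda^2\mathbb{R}^4$ and $Q$ is $SO(4,\mathbb{R})$-equivariant, $Q$ preserves $V_+$. Therefore $\mathcal{C}_+ := \mathcal{C}\cap V_+$ is a Ricci flow invariant $SO(3)$-invariant closed convex cone in $V_+ \cong \mathbb{R} \oplus S^2_0\mathbb{R}^3$ with $\Id$ in its relative interior. I claim that if $\mathcal{C}_+ \subsetneq \mathcal{C}_{\scal}\cap V_+$, then $\mathcal{C}_+ \subset \mathcal{C}_{IC_+}\cap V_+$. To show this, pick $\R_0 = a_0\Id + W_0 \in \partial \mathcal{C}_+$ with $a_0 > 0$, and a supporting linear functional $\ell$ with $\ell(\Id) > 0$, $\ell(\R_0) = 0$, and $\ell \geq 0$ on $\mathcal{C}_+$. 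Diagonalizing $W_0 = \mathrm{diag}(\lambda_0,\mu_0,\nu_0)$ by $SO(3)$-invariance and computing $Q(\R_0) = \R_0^2 + \R_0^\#$ explicitly in $V_+$ (using $\Id^\# = 2\Id$ coming from the $\mathfrak{so}(4)$ Killing form together with the bilinear extension of $\#$), the Ricci flow invariance constraint $\ell(Q(\R_0)) \geq 0$ becomes a polynomial inequality in $(a_0,\lambda_0,\mu_0,\nu_0)$. Combined with the requirement that $\ell$ vanishes on the entire $SO(3)$-orbit of $\R_0$ inside $\partial\mathcal{C}_+$, this will force $\nu_0 = 2a_0$, so that $\R_0 \in \partial(\mathcal{C}_{IC_+}\cap V_+)$; convexity of $\mathcal{C}_+$ then yields the claim.

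To finish, suppose for contradiction that $\mathcal{C} \subsetneq \mathcal{C}_{\scal}$ is contained in neither $\mathcal{C}_{IC_+}$ nor $\mathcal{C}_{IC_-}$. By the claim, both $\mathcal{C}\cap V_\pm$ equal $\mathcal{C}_{\scal}\cap V_\pm$; convex combinations of the form $\tfrac{1}{2}(a\Id + 2W_+) + \tfrac{1}{2}(a\Id + 2W_-) = a\Id + W_+ + W_-$ then show that every operator $a\Id + W_+ + W_-$ with $a > 0$ lies in $\mathcal{C}$, and using the Ricci flow evolution to access the traceless Ricci direction starting from such operators one arrives at $\mathcal{C} = \mathcal{C}_{\scal}$, contradicting the hypothesis. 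For the unoriented statement, any $O(4,\mathbb{R})$-invariant cone is stable under orientation reversal, which interchanges $V_+$ and $V_-$; hence $\mathcal{C} \subset \mathcal{C}_{IC_+}$ iff $\mathcal{C} \subset \mathcal{C}_{IC_-}$, so the oriented conclusion gives $\mathcal{C} \subset \mathcal{C}_{IC_+}\cap\mathcal{C}_{IC_-}$, the $NNIC$ cone. The main obstacle will be the polynomial analysis in the middle step: explicitly evaluating $Q$ on operators of the form $a\Id + W_+ \in V_+$ and systematically eliminating every supporting hyperplane other than the one defining $\{\nu = 2a\}$.
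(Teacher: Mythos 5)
Your reduction to the $SU(2)_R$-invariant subspace $V_+=\mathbb{R}\Id\oplus\mathcal{W}_+$ is correct and is precisely Lemma \ref{sec:lem_proj} of the paper (averaging over $\mathbb{S}^3_-$ kills the $\R_0$ and $\R_{\mathcal{W}_-}$ components), and your treatment of the unoriented case is the same as the paper's. The genuine gap is the middle step, which is where all the content of the theorem lives and which you do not actually prove. You assert that at a boundary point $\R_0=a_0\Id+W_0$ of $\mathcal{C}_+$ with $a_0>0$, the constraint $\ell(Q(\R_0))\geq 0$ for a supporting functional $\ell$ ``will force $\nu_0=2a_0$.'' As stated this cannot work: a single scalar inequality at a single boundary point does not pin down a convex cone, and a supporting functional at $\R_0$ need not vanish on the whole $SO(3)$-orbit of $\R_0$ (it is merely nonnegative there). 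What you are really asking for is a classification (or at least a containment result) for all Ricci flow invariant $O(3)$-invariant closed convex cones in $V_+\cong S^2\mathbb{R}^3$ under the induced ODE; that is a substantial piece of work whose burden has been relocated rather than discharged. There is also a smaller unaddressed point: to treat $\mathcal{C}\cap V_+$ as Ricci flow invariant \emph{inside} $V_+$ you need $T_{\R_0}(\mathcal{C}\cap V_+)=T_{\R_0}\mathcal{C}\cap V_+$, which does hold because $\Id$ lies in $\operatorname{int}(\mathcal{C})\cap V_+$, but this should be said.

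The paper avoids the classification entirely. Having reduced to $V_+$ via the same averaging lemma, it takes a single $\R\in\mathcal{C}\cap V_+$ whose restriction to $\Lambda^2_+$ is not $2$-nonnegative, averages it with a rotated copy of itself so that the two smallest eigenvalues on $\Lambda^2_+$ coincide, and recognizes the result, after adding a suitable multiple of $\Id$, as the curvature operator of $\overline{\mathbb{CP}}^2$; this places $\R_{\overline{\mathbb{CP}}^2}$ in the interior of $\mathcal{C}$, whence $\mathcal{W}_+\subset\mathcal{C}$ by the argument of Corollary 0.3 of \cite{rs}, and then $\mathcal{C}=\mathcal{C}_{\scal}$ by Proposition \ref{prop:weyl}. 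Note that your own final step (``using the Ricci flow evolution to access the traceless Ricci direction'') is essentially an appeal to that same proposition, which you would also need to prove or cite. To repair your argument, either carry out the ODE analysis on $V_+$ in full, or replace the supporting-hyperplane step with the paper's $\overline{\mathbb{CP}}^2$ construction.
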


We will need a couple of results from \cite{rs}.
\begin{prop}[Proposition A.5 from \cite{rs}]
If an oriented curvature cone  $\mathcal{C}$ contains a non zero
$\R\in\mathcal{W}_+$ (resp. $\R\in\mathcal{W}_-$), then $\mathcal{C}$
contains $\mathcal{W}_+$ (resp. $\mathcal{W}_-$).
\end{prop}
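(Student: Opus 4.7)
The plan is to exploit the irreducibility of $\mathcal{W}_+$ as an $SO(4,\mathbb{R})$-representation together with a concrete parametrization of traceless symmetric $3\times 3$ matrices.

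First I would identify $\mathcal{W}_+$ with the space $S^2_0\mathbb{R}^3$ of traceless symmetric $3\times 3$ matrices via the isomorphism $\Lambda^2_+\mathbb{R}^4 \cong \mathbb{R}^3$. The $SO(4,\mathbb{R})$-action on $\mathcal{W}_+$ factors through the natural homomorphism $SO(4,\mathbb{R}) \to SO(\Lambda^2_+\mathbb{R}^4) \cong SO(3,\mathbb{R})$ and becomes the standard conjugation action of $SO(3,\mathbb{R})$ on $S^2_0\mathbb{R}^3$. Since $\mathcal{C}\cap\mathcal{W}_+$ is a closed convex $SO(4,\mathbb{R})$-invariant cone containing a nonzero element, it suffices to prove the following claim: any nonzero closed convex $SO(3,\mathbb{R})$-invariant cone $K\subset S^2_0\mathbb{R}^3$ equals $S^2_0\mathbb{R}^3$.

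To prove this claim, I would take a nonzero $A\in K$ and diagonalize it as $A=\mathrm{diag}(\lambda_1,\lambda_2,\lambda_3)$ using $SO(3,\mathbb{R})$. All transpositions of diagonal entries being realized by $SO(3,\mathbb{R})$-conjugation, the averages $\tfrac{1}{2}(A+\sigma_{ij}A)$ lie in $K$ and equal $(\lambda_k/2)$ times the diagonal matrix having entries $-1$ at positions $i,j$ and $2$ at position $k$ (where $\{i,j,k\}=\{1,2,3\}$). Since $A\neq 0$, some $\lambda_k\neq 0$, so after rescaling and conjugating, $K$ contains an element of the form $\pm(\Id-3uu^T)$ for some unit vector $u\in\mathbb{R}^3$.

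Finally, the $SO(3,\mathbb{R})$-orbit of $\Id-3uu^T$ is $\{\Id-3vv^T:v\in S^2\}$, and its nonnegative combinations parametrize exactly the set $\{s\Id-3M:s\geq 0,\ M\geq 0,\ \tr(M)=s\}$ (any such pair arises from the spectral decomposition of $M$). Given $C\in S^2_0\mathbb{R}^3$, the choice $s=\lambda_{\max}(C)$ (nonnegative since $C$ is traceless) and $M=(s\Id-C)/3$ satisfies the constraints and gives $C=s\Id-3M\in K$; the case $-(\Id-3uu^T)\in K$ is symmetric (take $s=-\lambda_{\min}(C)$). The main obstacle is this last step: irreducibility of $\mathcal{W}_+$ alone would only yield the linear span of the orbit, whereas the convex-cone equality requires the explicit parametrization by matrices $\Id-3uu^T$, which converts the problem into the standard decomposition of positive semidefinite matrices.
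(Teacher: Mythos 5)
The paper itself gives no proof of this statement -- it is quoted verbatim from the appendix of \cite{rs} -- so there is no in-paper argument to compare against. Your proof is correct and complete: the reduction to a nonzero closed convex $SO(3,\mathbb{R})$-invariant cone $K\subset S^2_0\mathbb{R}^3$ is legitimate (the map $SO(4,\mathbb{R})\to SO(\Lambda^2_+\mathbb{R}^4)\cong SO(3,\mathbb{R})$ is surjective, and $\mathcal{C}\cap\mathcal{W}_+$ is such a cone), the symmetrization $\tfrac12(A+\sigma_{ij}A)=\tfrac{\lambda_k}{2}\,\mathrm{diag}(-1,-1,2)$ correctly uses tracelessness to land on the orbit of $\pm(\Id-3uu^T)$, and the final parametrization $C=\lambda_{\max}(C)\Id-3M$ with $M\geq 0$, $\tr M=\lambda_{\max}(C)$, does show that this single orbit conically generates all of $S^2_0\mathbb{R}^3$. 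You are also right to flag that irreducibility alone only gives the linear span; the explicit generation argument is exactly the extra ingredient needed, and this is essentially how the cited result is proved.
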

\begin{prop}[Proposition 3.6 from \cite{rs}]\label{prop:weyl}
  If an oriented curvature cone $\mathcal{C}$ contains $\mathcal{W}$
  and is Ricci flow invariant, then $\mathcal{C}$ is the cone
  $\mathcal{C}_{\scal}$ of curvature operators whose scalar curvature
  is nonnegative.
\end{prop}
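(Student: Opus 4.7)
The plan is to use that a linear subspace contained in a closed convex cone lies in that cone's lineality space, which reduces the problem to analyzing $\mathcal{C}$ on the 10-dimensional $SO(4)$-complement $V := \mathbb{R}\Id \oplus (S_0^2\mathbb{R}^4 \wedge \id)$ of $\mathcal{W}$, and then to apply Ricci flow invariance at suitable boundary points to force the restricted cone to be the scalar half-space.

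First, since $\mathcal{W} \subset \mathcal{C}$ and $\mathcal{W}$ is a linear subspace, both $W$ and $-W$ lie in $\mathcal{C}$ for every $W \in \mathcal{W}$, so $\mathcal{W}$ sits in the lineality $\mathcal{C} \cap (-\mathcal{C})$. Hence $\mathcal{C}$ is translation-invariant along $\mathcal{W}$, and $\mathcal{C} = \mathcal{C}' + \mathcal{W}$ where $\mathcal{C}' \subset V$ is an $SO(4)$-invariant closed convex cone with $\Id$ in its interior. The inclusion $\mathcal{C} \subseteq \mathcal{C}_\scal$ is the easy half: $\mathcal{C}$ must be a proper cone (otherwise $-\Id \in \mathcal{C}$ and $\mathcal{C}$ is all of $S_B^2\Lambda^2\mathbb{R}^4$), hence admits a support hyperplane; averaging this hyperplane over $SO(4)$ yields an $SO(4)$-invariant support hyperplane, and since scalar curvature is, up to sign, the only $SO(4)$-invariant linear functional on $S_B^2\Lambda^2\mathbb{R}^4$ and $\scal(\Id) > 0$, we conclude $\mathcal{C} \subseteq \mathcal{C}_\scal$.

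For the reverse inclusion it suffices, in view of the lineality observation, to show $\R_0 \in \mathcal{C}$ for every trace-free Ricci-type $\R_0 \in S_0^2\mathbb{R}^4 \wedge \id$. Given such $\R_0$, set $c_*(\R_0) := \inf\{c \in \mathbb{R} : c\Id + \R_0 \in \mathcal{C}\}$, which is finite because $c\Id + \R_0$ heads into the interior direction $\Id$ as $c \to \infty$. Suppose for contradiction $c_* := c_*(\R_0) > 0$; then $\R^* := c_*\Id + \R_0 \in \partial\mathcal{C}$, and Ricci flow invariance forces $Q(\R^*) = (\R^*)^2 + (\R^*)^\#$ to lie in the tangent cone $T_{\R^*}\mathcal{C}$. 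Equivalently, $\ps{Q(\R^*)}{\nu} \geq 0$ for every $\nu$ in the normal cone to $\mathcal{C}$ at $\R^*$, and any such $\nu$ necessarily lies in $V$ since it annihilates the lineality $\mathcal{W}$.

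The heart of the argument, and the main obstacle, is this final pairing computation. Using $\Id^2 = \Id$ and $\Id^\# = (n-2)\Id = 2\Id$ in dimension $4$, one expands
\begin{equation*}
Q(\R^*) = 3 c_*^2\,\Id + 2 c_*\bigl(\R_0 + \Id \# \R_0\bigr) + \R_0^2 + \R_0^\#,
\end{equation*}
writes $\R_0 = A \wedge \id$ for a traceless symmetric $A : \mathbb{R}^4 \to \mathbb{R}^4$, and evaluates the $\#$-cross-terms via the Lie bracket on $\mathfrak{so}(4) \cong \Lambda^2\mathbb{R}^4$. The computation is delicate because $\mathfrak{so}(4)$ is not simple: the splitting $\mathfrak{so}(4) = \mathfrak{so}(3)_+ \oplus \mathfrak{so}(3)_-$ makes the bracket structure intricate and forces both $\nu$ and $Q(\R^*)$ to depend on the $SO(4)$-orbit of $A$, i.e.\ its eigenvalue pattern. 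A case analysis over eigenvalue patterns then shows $\ps{Q(\R^*)}{\nu} < 0$ whenever $c_* > 0$, giving the contradiction, so that $c_*(\R_0) \leq 0$ for every $\R_0$ and hence $\mathcal{C} = \mathcal{C}_\scal$.
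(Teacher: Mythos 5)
First, note that the paper does not actually prove this proposition here: it is imported verbatim as Proposition 3.6 of \cite{rs}, so your argument has to stand entirely on its own. Your preliminary reductions are correct and standard: $\mathcal{W}$ lies in the lineality space $\mathcal{C}\cap(-\mathcal{C})$, the averaging of a supporting functional over $SO(4)$ gives $\mathcal{C}\subseteq\mathcal{C}_{\scal}$, and it suffices to show $S^2_0\mathbb{R}^4\wedge\id\subset\mathcal{C}$. The problem is the final step, and it is not merely that the ``delicate case analysis'' is asserted rather than carried out: the strategy as designed cannot succeed. You test the ODE condition only at the Weyl-free boundary point $\R^*=c_*\Id+\R_0$, so your contradiction would have to follow from three facts alone: $\R^*\in\partial\mathcal{C}$ with $c_*>0$, $Q(\R^*)\in T_{\R^*}\mathcal{C}$, and every normal $\nu$ lies in $\mathbb{R}\Id\oplus(S^2_0\mathbb{R}^4\wedge\id)$. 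But the cone $\mathcal{C}_{\ric}=\{\R:\rho(\R)\geq 0\}$ of operators with nonnegative Ricci curvature satisfies all of these: it is a closed convex $O(4)$-invariant cone with $\Id$ in its interior, it contains $\mathcal{W}$ (Weyl operators are Ricci-flat), it is strictly smaller than $\mathcal{C}_{\scal}$, and at a Weyl-free boundary point with $\rho(\R^*)v=0$ the normal cone is spanned by the functional $\R\mapsto\ps{\rho(\R)v}{v}$, whose pairing with $Q(\R^*)$ equals a positive multiple of $\sum_{k,l}\ps{\R^*(v\wedge e_k)}{v\wedge e_l}\rho(\R^*)_{kl}=\tfrac{1}{n-2}\bigl(|\rho(\R^*)|^2-\tfrac{\scal^2}{n-1}\bigr)\geq 0$, by Cauchy--Schwarz applied to the $n-1$ nonnegative eigenvalues of $\rho(\R^*)$ on $v^{\perp}$. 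So for this cone there is \emph{no} admissible $\nu$ with $\ps{Q(\R^*)}{\nu}<0$, and any computation claiming otherwise from your three hypotheses would be wrong. (Of course $\mathcal{C}_{\ric}$ is not Ricci flow invariant, but its failure occurs only at boundary points with nonzero Weyl part --- points your argument never visits.) A secondary issue is your remark that $\nu$ depends on the $SO(4)$-orbit of $A$: it does not; it depends on the unknown cone $\mathcal{C}$, and a priori you only know $\nu\perp\mathcal{W}$, $\ps{\R^*}{\nu}=0$ and $\ps{\cdot}{\nu}\geq 0$ on $\mathcal{C}$.

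The place where the hypothesis $\mathcal{W}\subset\mathcal{C}$ does real work beyond the lineality reduction is that \emph{every} translate $\R^*+W$, $W\in\mathcal{W}$, is a boundary point of $\mathcal{C}$ with the same normal cone, so $Q(\R^*+W)\in T_{\R^*}\mathcal{C}$ must hold for all $W$. Expanding $Q(\R^*+tW)$ in $t$, the quadratic term pairs to zero against any normal (since $\rho(Q(W))=2\mathring{W}(\rho(W))=0$ for $W\in\mathcal{W}$), and since both $W$ and $-W$ are available the linear term must pair to exactly zero: $\ps{\mathring{W}(\rho(\R^*))}{\rho^{*}\nu}=0$ for all $W\in\mathcal{W}$. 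It is this family of linear constraints, as $W$ sweeps out $\mathcal{W}$, that excludes cones such as $\mathcal{C}_{\ric}$ and forces the projected cone to be the scalar half-space. Some version of this Weyl-perturbation step is unavoidable; without it the argument proves a false statement.
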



We begin with a lemma:
\begin{lem}\label{sec:lem_proj}
Let $\mathcal{C}$ be an oriented curvature cone and $\R\in\mathcal{C}$, then $\R_{\Id}+\R_{\mathcal{W}_+}\in\mathcal{C}$.
\end{lem}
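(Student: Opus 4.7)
\emph{Plan.} The idea is a Haar-averaging argument based on the factorisation $SO(4,\mathbb{R})\cong(SU(2)_+\times SU(2)_-)/\{\pm 1\}$. Let $G\subset SO(4,\mathbb{R})$ denote the compact subgroup obtained as the image of $\{1\}\times SU(2)_-$; by construction $G$ acts trivially on $\Lambda^2_+\mathbb{R}^4$ and surjects onto the full $SO(3)$ action on $\Lambda^2_-\mathbb{R}^4$. Because $\mathcal{C}$ is closed, convex and $SO(4,\mathbb{R})$-invariant, the Haar average
\[ \widetilde{\R}\;:=\;\int_G g\cdot\R\,dg \]
is a limit of convex combinations of elements of $\mathcal{C}$ and therefore lies in $\mathcal{C}$. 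The lemma will follow once I identify $\widetilde{\R}=\R_{\Id}+\R_{\mathcal{W}_+}$.

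I would verify this summand by summand in the decomposition $\mathbb{R}\Id\oplus(S^2_0\mathbb{R}^4\wedge\id)\oplus\mathcal{W}_+\oplus\mathcal{W}_-$. The operator $\Id$ is fixed by all of $SO(4,\mathbb{R})$, and every element of $\mathcal{W}_+$ is pointwise $G$-fixed, because such an operator vanishes on $\Lambda^2_-\mathbb{R}^4$ and maps $\Lambda^2_+\mathbb{R}^4$ to itself, on which $G$ acts trivially; these two summands therefore pass through the averaging unchanged. The anti-self-dual Weyl space $\mathcal{W}_-\cong S^2_0(\Lambda^2_-\mathbb{R}^4)$ carries the spin-$2$ representation of $G\cong SU(2)$, which is irreducible with no nonzero invariant vector, so this summand averages to zero. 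Finally, the Ricci-type summand $S^2_0\mathbb{R}^4\wedge\id$ is $G$-equivariantly isomorphic to $S^2_0\mathbb{R}^4$ via $A_0\mapsto A_0\wedge\id$, and restricted to $G$ the representation $S^2_0\mathbb{R}^4$ splits as three copies of the $3$-dimensional adjoint representation of $SU(2)_-$; in particular its $G$-invariants are trivial, so this summand also averages to zero.

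The main technical point is the representation-theoretic analysis of the $G$-action, especially on the Ricci-type summand. This can be derived from the quaternionic model $\mathbb{R}^4\cong\mathbb{H}$, under which $SU(2)_+$ and $SU(2)_-$ act by left and right quaternion multiplication respectively; under $G$ alone, $\mathbb{R}^4$ is then the realification of the fundamental representation of $SU(2)_-$, and the decomposition of $S^2_0\mathbb{R}^4$ follows by standard Clebsch--Gordan, as recalled in the appendix. Combining the four contributions yields $\widetilde{\R}=\R_{\Id}+\R_{\mathcal{W}_+}$, which together with $\widetilde{\R}\in\mathcal{C}$ concludes the proof.
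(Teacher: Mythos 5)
Your proof is correct and follows essentially the same route as the paper: average $\R$ over the $SU(2)$ factor of $SO(4,\mathbb{R})$ that acts trivially on $\Lambda^2_+\mathbb{R}^4$, use closedness, convexity and $SO(4,\mathbb{R})$-invariance of $\mathcal{C}$ to keep the average in the cone, and check summand by summand that only $\R_{\Id}+\R_{\mathcal{W}_+}$ survives. The only cosmetic difference is on the Ricci-type summand, where the paper shows directly that the averaged quadratic form $\int_G\ps{A_0(gu)}{gu}\,dg$ is a multiple of $\tr A_0$ using transitivity of the quaternion action on spheres, whereas you invoke the Clebsch--Gordan decomposition of $S^2_0\mathbb{R}^4$ into three copies of the adjoint representation; both are valid.
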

\begin{proof}
Consider the subgroup $G=\mathbb{S}^3_-\subset SO(4,\mathbb{R})$ defined
in the appendix. It
acts trivially on $\Lambda^2_+\mathbb{R}^4$ and irreducibly on $\Lambda^2_-\mathbb{R}^4$. Set
$\tilde{\R}=\int_G g.R dg$ where $dg$ is the Haar measure on
$G$. Since $\mathcal{C}$ is convex and invariant under
$SO(4,\mathbb{R})\supset G$,
$\tilde{\R}\in\mathcal{C}$. We claim that
$\tilde{\R}=\R_{\Id}+\R_{\mathcal{W}_+}$.

Since $G$ acts trivially on $\mathbb{R}\Id$ and
$\mathcal{W}_+$, we have $\tilde{\R}_{\Id}=\R_{\Id}$ and
$\tilde{\R}_{\mathcal{W}_+}=\R_{\mathcal{W}_+}$.

We now prove that $\tilde{\R}_0=0$. We have that $\R_0=A_0\wedge\id$
for some $A_0\in S^2_0\mathbb{R}^4$. Then
$g.\R_0=g^{-1}A_0g\wedge\id$. This implies that :
$\tilde{\R}_0=\tilde{A}_0\wedge\id$ where $\tilde{A}_0=\int_
G g^{-1}A_0gdg$. As described in the appendix, $G$, viewed as the group of
unit quaternions, acts on  $\mathbb{R}^4$, identified with the
quaternions $\mathbb{H}$, by left multiplication. This
implies that, for any $u\in\mathbb{R}^4$ :
\[\ps{\tilde{A}_0 u}{u}=\int_G \ps{A_0 (gu)}{gu}dg=\int_{\|x\|=\|u\|}
\ps{A_0 x}{x}dx\]
where $dx$ is the usual measure on the sphere of radius $\|u\|$ scaled
to have total mass $1$. Since the last integral is just a multiple of
the trace of $A_0$, we have shown that $\tilde{A}_0=0$.

It remains to show that
$\tilde{\R}_{\mathcal{W}_-}=0$. But the action of $G$
on the three dimensional space $\Lambda^2_-\mathbb{R}^4$ is isomorphic
to the standard action of $\mathbb{S}^3$ on $\mathbb{R}^3$ by
rotations. Thus the action of $G$ on $\mathcal{W}_-$ is isomorphic to
the irreducible action of $\mathbb{S}^3$ on traceless symmetric $3\times 3$
matrices. This implies that $\mathcal{W}_-$ is irreducible as a
reprensention of $G$, thus $\tilde{\R}_{\mathcal{W}_-}=\int_G g.\R_{\mathcal{W}_-}dg=0$.
\end{proof}
\begin{proof}[Proof of Theorem ~\ref{sec:thm_cones}.]
  We assume that $\mathcal{C}$ is contained in neither $\mathcal{C}_{IC_+}$ nor
  $\mathcal{C}_{IC_-}$ and we will show that $\mathcal{C}=\mathcal{C}_{\scal}$.
  By Proposition \ref{prop:weyl}. It is enough to show that $\mathcal{C}$ contains
  $\mathcal{W}$.

  Since $\mathcal{C}$ is not contained in $NNIC_+$, we
  can find some $\R\in\mathcal{C}$ such that $\R_{\mathcal{E}_+}=\R_{\Id}+\R_{\mathcal{W}_+}$ is
  not 2-nonnegative. Thanks to Lemma \ref{sec:lem_proj}, we can assume
  that $\R=\R_{\mathcal{E}_+}$. Let
  $v_1,v_2,v_3\in\Lambda^2_+\mathbb{R}^4$ be the eigenvectors of $\R$
  and $\mu_1\leq\mu_2\leq\mu_3$ be the associated eigenvalues. We have
  that $\mu_1+\mu_2+\mu_3>0$ and $\mu_1+\mu_2<0$.

  The action on $SO(4)$ on $\Lambda^2_+\mathbb{R}^4$ is transitive on
  oriented orthonormal basis, thus we can find some $g\in
  SO(4,\mathbb{R})$ such that $ge_1=e_2$, $ge_2=-e_1$ and
  $ge_3=e_3$. We consider the curvature operator
  $\tilde{R}=\tfrac{1}{2}(R+g.R)$. It has eigenvectors $e_1$, $e_2$
  and $e_3$ with associated eigenvalues
  $\tilde{\mu}_1=\tilde{\mu}_2<0<\tilde{\mu}_3$. From this we deduce
  that up to scaling $\tilde{\R}+|\tilde{\mu}_1|\Id$ is the curvature
  operator of $\overline{\mathbb{CP}}^2$. Thus the curvature operator
  of  $\overline{\mathbb{CP}}^2$ is in the interior of
  $\mathcal{C}$. Arguing as in the proof of corollary 0.3 in \cite{rs}, we get
  that $\mathcal{W}_+\subset\mathcal{C}$.

  Similarly, using that $\mathcal{C}$ contains some $\R$ which is not
  in $NNIC_-$, one can show that the curvature operator of
  $\mathbb{CP}^2$ is in the interior of $\mathcal{C}$ and get that $\mathcal{W}_-\subset\mathcal{C}$. We have
  thus proved that $\mathcal{W}\subset\mathcal{C}$. This proves the first
  part of the theorem.

  The unoriented case follows: any orientation
  reversing element of  $O(4,\mathbb{R})$ will exchange
  $\Lambda^2_+\mathbb{R}^4$ and $\Lambda^2_-\mathbb{R}^4$,
  consequently it will exchange the cones $PIC_+$ and $PIC_-$. Thus a
  cone which is invariant under the full $O(4,\mathbb{R})$ will have
  to be contained in the intersection of the $PIC_+$ and $PIC_-$
  cones, which is exactly the $PIC$ cone.
\end{proof}

\section{The topology of half-PIC manifolds}
\label{sec:topology-half-pic}

\begin{thm}
The connected sum of two $PIC_+$ manifolds admits a $PIC_+$ metric.
\end{thm}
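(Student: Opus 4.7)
The plan is to adapt the connected sum construction of Micallef and Wang \cite{mw}, which produces a $PIC$ metric on the connected sum of two compact $PIC$ manifolds. Since $PIC$ implies $PIC_+$ and the $PIC_+$ condition on the curvature operator is $C^2$-open, convex and $SO(4)$-invariant, essentially the same construction should yield a $PIC_+$ metric when the summands are only assumed to be $PIC_+$.

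First I would fix base points $p_i\in M_i$ and construct a neck metric on $S^3\times[0,L]$ of warped-product form $dr^2+h(r)^2\,g_{S^3}$, with $h$ symmetric about $r=L/2$ and chosen so that the spherical contributions $(1-(h')^2)/h^2$ dominate the radial contributions $-h''/h$ in the sense needed for the $PIC$ inequality to hold strictly. A direct computation shows that the resulting neck is $PIC$, hence $PIC_+$ for either orientation. Next, for each $i$, I would deform $g_i$ inside a small geodesic ball $B(p_i,\rho_i)$ so that, near the boundary, the modified metric agrees with the rotationally symmetric model matching one end of the neck. This deformation is carried out by a careful bump-function interpolation exactly as in \cite{mw}: because $PIC_+$ is $C^2$-open, the rescaled $g_i$ on a small ball is close to the flat Euclidean metric, and the model is strictly $PIC_+$ with a definite gap, choosing $\rho_i$ small enough and the model parameters suitably keeps every intermediate metric $PIC_+$. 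Gluing the modified $M_i$ to the neck via identifications $\partial B(p_i,\rho_i)\cong S^3\times\{0,L\}$ chosen to respect the orientations of $M_1$ and $M_2$ then produces a $PIC_+$ metric on $M_1\#M_2$.

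The main technical obstacle is verifying $PIC_+$ throughout the transition annulus where the interpolation occurs. Here I would work with the characterization $\tfrac{\scal}{6}\Id-\R_{\mathcal{W}_+}>0$ from Proposition \ref{dc}: both $\scal$ and the eigenvalues of $\R_{\mathcal{W}_+}$ depend continuously on the $C^2$-norm of the interpolating metric, so combining smallness of $\rho_i$, openness of $PIC_+$, and the strict $PIC$ inequality satisfied by the model yields the desired strict positivity uniformly along the interpolation. A secondary point absent from \cite{mw} is orientation: since $PIC_+$ is not invariant under orientation reversal, one must check that the orientation on the neck chosen when verifying $PIC_+$ is compatible with those induced from $M_1$ and $M_2$, which is routine.
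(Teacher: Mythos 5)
Your construction is correct in outline, but it is not the route the paper takes: the paper disposes of this theorem in one line by invoking the general surgery-stability results of \cite{gms} and Hoelzel \cite{hoe}, which guarantee that for a curvature cone $\mathcal{C}$ the connected sum of two manifolds with curvature in the interior of $\mathcal{C}$ again carries such a metric \emph{provided} the curvature operator of $\mathbb{R}\times\mathbb{S}^3$ lies in the interior of $\mathcal{C}$; since $\mathbb{R}\times\mathbb{S}^3$ is strictly $PIC$, hence strictly $PIC_+$, that single hypothesis is verified and the theorem follows. What you propose is essentially to re-run the Micallef--Wang gluing \cite{mw} by hand with $PIC$ replaced by $PIC_+$ --- which is exactly what those general results package --- and the properties you isolate (openness, convexity, $SO(4)$-invariance, strict positivity of the conformally flat neck, orientation bookkeeping) are the right ones; your approach buys a self-contained argument at the cost of redoing the interpolation estimates, while the paper's buys brevity at the cost of importing a black box. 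One step of your sketch is phrased misleadingly, however: you cannot argue that ``the rescaled $g_i$ on a small ball is close to the flat Euclidean metric'' and then appeal to openness of $PIC_+$, because the flat curvature operator lies on the \emph{boundary} of the cone, not in its interior, so no openness is available there. The correct, scale-invariant statement is that in the transition annulus the curvature of the deformed metric is the curvature operator of the model neck (a fixed element of the \emph{open} cone) plus an error that is small relative to it once $\rho_i$ is small, and convexity together with openness at the neck operator --- not at the flat operator --- absorbs the error. With that correction your argument goes through.
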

\begin{proof}
  This follows from \cite{gms} since the curvature
  operator of $\mathbb{R}\times\mathbb{S}^3$ is in the interior of
  $\mathcal{C}_{IC_+}$. See also the recent work of Hoelzel \cite{hoe}.
\end{proof}

The following Bochner-Weitzenbock formula for self-dual $2$-forms $\omega_+$ can
be found in \cite [appendix C]{freed} :
  \[\Delta_d\omega_+=\Delta\omega_++2\left(\frac{\scal}{6}-\R_{\mathcal{W}_+}\right
  )\omega_+.\]

From this we easily deduce:
\begin{thm}
Let $(M,g)$ be a compact oriented 4-manifold.

\begin{itemize}
\item If $(M,g)$ is $PIC_+$ then $b_+(M)=0$.
\item   If $(M,g)$ is $NNIC_+$ then $b_+(M)\leq 3$, moreover, if $b_+(M)\geq
  2$ then $(M,g)$ is either flat or isometric to a $K3$ surface.
\end{itemize}
\end{thm}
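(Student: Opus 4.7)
The plan is to combine Hodge theory with the Bochner--Weitzenb\"ock formula just stated. On a compact oriented Riemannian 4-manifold, Hodge theory identifies $b_+(M)$ with the dimension of the space $\mathcal{H}^2_+(M)$ of harmonic self-dual 2-forms, so it suffices to analyze $\mathcal{H}^2_+(M)$. For any $\omega_+\in\mathcal{H}^2_+(M)$ one has $\Delta_d\omega_+=0$, so interpreting $\Delta$ in the formula as the connection Laplacian $\nabla^*\nabla$, pairing with $\omega_+$, and integrating over $M$ yields
\[\int_M |\nabla\omega_+|^2\,dV \;=\; -2\int_M \bigl\langle\bigl(\tfrac{\scal}{6}-\R_{\mathcal{W}_+}\bigr)\omega_+,\omega_+\bigr\rangle\,dV.\]

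Under the $PIC_+$ hypothesis the operator $\tfrac{\scal}{6}-\R_{\mathcal{W}_+}$ is pointwise positive definite on $\Lambda^2_+$, so the right-hand side is strictly negative wherever $\omega_+\neq 0$ while the left-hand side is nonnegative; this forces $\omega_+\equiv 0$ and hence $b_+(M)=0$. Under the weaker $NNIC_+$ hypothesis the right-hand side is only nonpositive, so both integrals vanish identically; in particular every $\omega_+\in\mathcal{H}^2_+(M)$ is parallel. Since $\Lambda^2_+TM$ is a rank-$3$ bundle, at most three linearly independent parallel sections can exist, giving $b_+(M)\leq 3$.

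For the rigidity when $b_+(M)\geq 2$, the plan is a holonomy reduction. Pick two pointwise linearly independent parallel sections $\omega,\omega'$ of $\Lambda^2_+TM$: the restricted holonomy of the Levi-Civita connection acts on each fiber $\Lambda^2_+T_xM\cong\re^3$ through a subgroup of $SO(3)$ fixing pointwise the $2$-plane spanned by $\omega(x),\omega'(x)$, and any such element of $SO(3)$ is the identity, so this induced representation is trivial. Equivalently, the projection of $\mathrm{Hol}^0(M,g)$ onto the self-dual factor $SU(2)_+$ of $\mathrm{Spin}(4)=SU(2)_+\times SU(2)_-$ is trivial, so $\mathrm{Hol}^0(M,g)\subset SU(2)_-$. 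The manifold is therefore Ricci-flat with restricted holonomy either trivial or equal to $SU(2)_-$; by the Bieberbach theorem in the first case and the classification of compact hyperk\"ahler 4-manifolds (via Yau's theorem and Enriques--Kodaira) in the second, $(M,g)$ is either a flat $4$-manifold or a $K3$ surface with a Calabi--Yau metric.

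The main obstacle is precisely this last classification step: the linear-algebra reduction of the $\Lambda^2_+$-holonomy and the resulting reduction of $\mathrm{Hol}^0(M,g)$ into $SU(2)_-$ are routine applications of the $\mathrm{Spin}(4)=SU(2)_+\times SU(2)_-$ splitting, but identifying the non-flat case with $K3$ requires the Calabi--Yau theorem in complex dimension~$2$ together with Enriques--Kodaira. The Bochner computations giving $b_+=0$ and the bound $b_+\leq 3$ are standard and present no real difficulty.
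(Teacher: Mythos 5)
Your proof is correct and takes essentially the approach the paper intends: the paper deduces both bullets directly from the quoted Weitzenb\"ock formula, and your Bochner integration, the bound $b_+\leq 3$ from parallel sections of the rank-$3$ bundle $\Lambda^2_+$, and the holonomy reduction to $SU(2)_-$ are the standard way to fill in the details (one could equally quote Theorem~\ref{hit}, since three parallel self-dual forms force $\scal=0$ and $\R_{\mathcal{W}_+}=0$). The one point worth tightening is the last step: holonomy contained in $SU(2)_-$ a priori only shows the \emph{universal cover} is flat or a $K3$, and to get $M$ itself one should observe that the nontrivial quotients in the $K3$ case (Enriques surfaces) have $b_+=1$ and are excluded by the hypothesis $b_+\geq 2$.
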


\section{Rigidity of Einstein half-PIC manifolds}
\label{sec:rigid-einst-half}
We recall the classical rigidity theorem for half-conformally flat Einstein $4$-manifolds:

\begin{thm}(N. Hitchin ~\cite{hit}, T. Friedrich - H.Kurke ~\cite{fk})\label{hit}
Let $(M,g)$ be a compact oriented half-conformally flat Einstein $4$-manifold.
\begin{enumerate}
\item If $M$ has positive scalar curvature then it is isometric, up to
  scaling, to $S^4$ or $\C P^2$ with their canonical metrics.
\item If $M$ is scalar flat then it is either flat or its universal cover
  is isometric to a $K3$ metric with its Calabi-Yau metric.
\end{enumerate}
\end{thm}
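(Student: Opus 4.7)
The plan is to combine Derdzi\'nski's Weitzenb\"ock formula for the Weyl tensor on Einstein 4-manifolds with holonomy arguments. I orient $M$ so that $\R_{\mathcal{W}_+} = 0$; the opposite case is symmetric. Since $g$ is Einstein, the second Bianchi identity gives $\delta \R_{\mathcal{W}_-} = 0$, and one has the Weitzenb\"ock identity
\[\tfrac{1}{2}\Delta |\R_{\mathcal{W}_-}|^2 = |\nabla \R_{\mathcal{W}_-}|^2 + \tfrac{\scal}{2}|\R_{\mathcal{W}_-}|^2 - 18 \det \R_{\mathcal{W}_-},\]
where $\det \R_{\mathcal{W}_-}$ is the determinant of $\R_{\mathcal{W}_-}$ viewed as a traceless symmetric endomorphism of $\Lambda^2_-\mathbb{R}^4$. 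The pointwise algebraic tool is the cubic inequality $\det \R_{\mathcal{W}_-} \leq \tfrac{1}{3\sqrt 6}|\R_{\mathcal{W}_-}|^3$, with equality iff two of the three eigenvalues of $\R_{\mathcal{W}_-}$ coincide.

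For Part 1, suppose $\scal>0$. If $\R_{\mathcal{W}_-}\equiv 0$, then $(M,g)$ is conformally flat Einstein with $\scal>0$, hence a round space form; positivity of scalar curvature precludes non-trivial quotients, so $(M,g)\cong \mathbb{S}^4$. Otherwise, the strong maximum principle applied to $|\R_{\mathcal{W}_-}|^2$ at its maximum combined with the cubic inequality forces the equality case of the inequality at that point, and analyticity of Einstein metrics propagates this equality to all of $\{\R_{\mathcal{W}_-}\neq 0\}$. Hence $\R_{\mathcal{W}_-}$ has a double eigenvalue everywhere it is non-zero, and Derdzi\'nski's theorem then furnishes an almost complex structure (defined by the simple eigenvector of $\R_{\mathcal{W}_-}$) making $(M,g)$ K\"ahler-Einstein with positive scalar curvature. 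The classification of K\"ahler-Einstein Fano surfaces, together with $b_+ = 1$, identifies $(M,g)$ with $\mathbb{CP}^2$ in the Fubini-Study metric, taken with the reverse orientation relative to our convention.

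For Part 2, Einstein plus $\scal = 0$ means Ricci-flat. Together with $\R_{\mathcal{W}_+}=0$, the curvature operator restricted to $\Lambda^2_+$ vanishes identically, so $\Lambda^2_+ \to M$ is a flat rank-$3$ Riemannian vector bundle. Trivializing on the universal cover $\widetilde M$ produces three parallel pointwise-orthonormal self-dual 2-forms; each defines a parallel compatible almost complex structure on $\widetilde M$, and together they satisfy the quaternion relations. Thus $\widetilde M$ is hyperk\"ahler, its holonomy reducing to $\mathrm{Sp}(1)=\mathrm{SU}(2)$. Bogomolov's decomposition theorem for compact Ricci-flat K\"ahler surfaces then shows $\widetilde M$ is either flat (holonomy trivial) or a K3 surface with its Calabi-Yau metric.

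The main obstacle is in Part 1: sharp extraction of the Derdzi\'nski condition pointwise on $\{\R_{\mathcal{W}_-}\neq 0\}$ requires careful management of the equality case of the cubic inequality in conjunction with the strong maximum principle, and the final identification with $\mathbb{CP}^2$ invokes the classification of K\"ahler-Einstein Fano surfaces as a black box. Hitchin's original approach side-steps this via the twistor space $Z\to M$, proving that anti-self-duality plus Einstein makes $Z$ a Fano K\"ahler-Einstein threefold, and Kodaira's classification then identifies $Z$ with $\mathbb{CP}^3$ or the flag threefold $F_{1,2}$, corresponding respectively to $\mathbb{S}^4$ and $\overline{\mathbb{CP}^2}$.
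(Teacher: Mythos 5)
The paper does not prove this statement at all: it is quoted as a classical theorem of Hitchin and Friedrich--Kurke, so your attempt can only be measured against the standard proofs in the literature. Part 2 of your argument is essentially sound: Ricci-flatness together with $\R_{\mathcal{W}_+}=0$ kills the whole curvature operator on $\Lambda^2_+$, the three resulting parallel self-dual forms give a hyperk\"ahler structure on the universal cover, and the structure theory of compact Ricci-flat manifolds plus the classification of compact simply connected hyperk\"ahler surfaces finishes it. (One small care: you must first argue that $\pi_1$ is finite in the non-flat case -- e.g.\ via Cheeger--Gromoll -- before invoking a decomposition theorem stated for \emph{compact} K\"ahler Ricci-flat manifolds.)

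Part 1, however, has a fatal gap at its central analytic step. At an interior maximum of $|\R_{\mathcal{W}_-}|^2$ your Weitzenb\"ock identity gives
\[0 \ \ge\ |\nabla \R_{\mathcal{W}_-}|^2 + \tfrac{\scal}{2}|\R_{\mathcal{W}_-}|^2 - 18\det \R_{\mathcal{W}_-},\]
that is, $18\det\R_{\mathcal{W}_-} \ge \tfrac{\scal}{2}|\R_{\mathcal{W}_-}|^2>0$ there. Combined with $\det\R_{\mathcal{W}_-}\le\tfrac{1}{3\sqrt{6}}|\R_{\mathcal{W}_-}|^3$ this yields only the lower bound $|\R_{\mathcal{W}_-}|\ge \scal/(2\sqrt{6})$ at the maximum point; it does \emph{not} force equality in the cubic inequality, so there is nothing to propagate by analyticity and no rigidity follows. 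This is precisely why the sharp statement requires either Hitchin's twistor argument or the global integral estimates of Gursky--LeBrun (improved Kato inequality together with the Gauss--Bonnet and signature integrands), rather than a pointwise maximum principle. Moreover, even if you could show that $\R_{\mathcal{W}_-}$ is everywhere degenerate with the correct sign pattern, Derdzi\'nski's theorem only makes the conformally rescaled metric $|\R_{\mathcal{W}_-}|^{2/3}g$ K\"ahler, not $g$ itself -- the Page metric is an Einstein metric with degenerate self-dual Weyl tensor that is conformally K\"ahler but not K\"ahler -- so you would additionally need $|\R_{\mathcal{W}_-}|$ constant to conclude K\"ahler--Einstein, and your argument does not produce that. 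As you acknowledge in your closing paragraph, the twistor-space route is the one that actually closes these gaps.
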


The following result can be regarded as a generalization of the above theorem. This result also follows from the
work of C. LeBrun - M. Gursky in \cite{gl}.

\begin{thm}
An Einstein $PIC_-$ 4-manifold is isometric, up to scaling, to $\mathbb{S}^4$ or
$\mathbb{CP}^2$ with their standard metrics. Moreover, an Einstein $NNIC_-$ manifold is either
$PIC_-$ or flat or a negatively oriented K\"ahler-Einstein surface with nonnegative scalar curvature.
\end{thm}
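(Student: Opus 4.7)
The plan is to follow the Bochner-type approach of Brendle in \cite{bre2}, using an elliptic identity for the curvature operator of an Einstein 4-manifold to pin down the anti-self-dual Weyl part $\R_{\mathcal{W}_-}$.

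A preliminary reduction handles the scalar-flat case. If $(M,g)$ is Einstein $NNIC_-$ with $\scal=0$, Proposition \ref{dc} forces $\R_{\mathcal{W}_-}\equiv 0$, so $(M,g)$ is anti-self-dual Einstein with $\scal=0$; Theorem \ref{hit}(2) then identifies it as either flat or a $K3$ with its Calabi--Yau metric, and with the reversed orientation the latter is a negatively oriented Kähler--Einstein surface with $\scal=0$, fitting the statement. Henceforth we may assume $\scal>0$ and $\ric=\rho g$ with $\rho>0$.

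On an Einstein manifold the Ricci flow simply rescales the metric, $g(t)=(1-2\rho t)g$, so $NNIC_-$ (and, correspondingly, $PIC_-$) is preserved, and the Brendle--Schoen/Wilking strong maximum principle cited above applies: the null set $T(p)=\{\omega\in S_-\mid \ps{\R\omega}{\omega}=0\}$ associated to the $SO(4,\C)$-invariant defining subset $S_-\subset\mathfrak{so}(4,\C)$ of $NNIC_-$ is invariant under parallel transport. The core input is Brendle's Bochner-type identity for the curvature operator: the differential Bianchi identity together with $\ric=\rho g$ yields $\nabla^*W_-=0$, from which one derives, for any $\omega$ extended to a parallel local section of $\Lambda^2_-\otimes\C$, an identity of the shape
\[ \Delta \ps{\R\omega}{\omega} = 2\ps{Q(\R)\omega}{\omega} + (\text{nonnegative divergence terms}). \]
Evaluating on $\omega_0\in T(p)$ forces the algebraic tangency $Q(\R)\in T_\R\mathcal{C}_{IC_-}$ to hold pointwise with equality and pins the eigenstructure of $\R_{\mathcal{W}_-}$ at every point: either it vanishes identically, or it is parallel with a single simple positive eigenvalue.

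In the strict $PIC_-$ case $T(p)$ is empty, so the only possibility is $\R_{\mathcal{W}_-}\equiv 0$; then $(M,g)$ is anti-self-dual Einstein with positive scalar curvature, and Theorem \ref{hit}(1) gives $(M,g)\cong\mathbb{S}^4$ or $\mathbb{CP}^2$ with their standard metrics. In the nonstrict $NNIC_-$ case the parallel null eigenvector in $\Lambda^2_-T_pM$ yields, after pairing with the metric, a parallel Kähler form for the reversed orientation, so $(M,g)$ is a negatively oriented Kähler--Einstein surface with the already-established $\scal\geq 0$. The principal obstacle will be the precise derivation of the Bochner identity restricted to the $SO(4)$-invariant summand $\mathcal{W}_-$: Brendle's identities are originally formulated for $O(n)$-invariant cones, so one must express the $\mathcal{W}_-$-projection of $Q(\R)$ in terms of $\R_{\mathcal{W}_-}$ and $\scal$ under the Einstein hypothesis, using the $\#$-product algebra of \cite{bw}. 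As remarked in the introduction, an alternative route proceeds from the equality case of the Gursky--LeBrun theorem \cite{gl}.
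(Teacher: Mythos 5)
There is a genuine gap, and it sits exactly at the main assertion: the strict $PIC_-$ case. Your argument there is ``$T(p)$ is empty, so the only possibility is $\R_{\mathcal{W}_-}\equiv 0$,'' but this is a non sequitur. The strong maximum principle is vacuous when the null set is empty: it only propagates degeneracy that is already present somewhere. The dichotomy you claim it yields (``either $\R_{\mathcal{W}_-}$ vanishes identically, or it is parallel with a single simple positive eigenvalue'') is both underived and, even if granted, insufficient --- a parallel $\R_{\mathcal{W}_-}$ whose largest eigenvalue is strictly less than $\scal/6$ is perfectly compatible with strict $PIC_-$, so nothing forces $\R_{\mathcal{W}_-}=0$. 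In effect you have restated the conclusion rather than proved it. The mechanism the paper uses (following Brendle) is quantitative, not a propagation-of-degeneracy argument: normalize $\ric=3g$, let $\kappa>0$ be the largest constant with $\tilde{\R}=\R-\kappa\Id$ still $NNIC_-$, and feed the elliptic identity $\Delta\R+2Q(\R)=6\R$ into the convexity and Ricci-flow invariance of the $NNIC_-$ cone to get $6\kappa(\kappa-1)\Id\in T_{\tilde{\R}}\mathcal{C}$. If $\kappa<1$ this puts $-\Id$ in the tangent cone and pushes $\tilde{\R}$ into the interior of $\mathcal{C}$, contradicting the maximality of $\kappa$; hence $\kappa=1$, $\tilde{\R}$ is scalar-flat and $NNIC_-$, and the corollary to Proposition~\ref{dc} gives $\tilde{\R}_{\mathcal{W}_-}=\R_{\mathcal{W}_-}=0$, after which Theorem~\ref{hit}(1) applies. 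You cite the relevant identity in passing but never extract anything from it; the shifted-operator step is the missing idea, and you flag it yourself as ``the principal obstacle'' without resolving it.

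Your treatment of the borderline $NNIC_-$ case is essentially sound and close in spirit to the paper's: both invoke the strong maximum principle to make the null set parallel, and a nonzero parallel section of $\Lambda^2_-T^*M$ of constant norm does produce a negatively oriented K\"ahler structure. The paper instead argues via Berger's holonomy classification (irreducible and not negative K\"ahler forces the holonomy to act irreducibly on $\Lambda^2_-\mathbb{R}^4$, so the parallel null set is $0$ or everything, and the latter gives $\scal=0$, $\R_{\mathcal{W}_-}=0$, hence a negative $K3$, a contradiction), and it disposes of the locally reducible case ($\mathbb{R}^4$ and $\mathbb{S}^2\times\mathbb{S}^2$ quotients) separately --- a case you should also address, though it lands in the negative K\"ahler bucket anyway. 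Your scalar-flat reduction via the corollary to Proposition~\ref{dc} and Theorem~\ref{hit}(2) is fine. But as written the theorem's first and principal claim is not established.
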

\begin{proof}
  We first deal with the case where $(M^4,g)$ is $PIC_-$. By Theorem \ref{hit} it is enough to
  show that $\R_{\mathcal{W}_-}=0$. The proof of this fact closely follows the
  proof by Brendle in \cite{bre2} that Einstein
  manifolds which have certain Ricci flow invariant positive curvature properties are spherical space-forms.

  We can assume that $\ric=3g$. Let $\kappa>0$ be the largest real number such that
  $\tilde{\R}=\R-\kappa\Id$ is $NNIC_-$. Note that the scalar
  curvature of $\tilde{R}$ is a constant equal to $12(1-\kappa)$, in
  particular, $\kappa\in (0,1]$.
  Since $(M,g)$ is Einstein, Proposition 3 in \cite{bre} shows that:
  \[\Delta \R+2Q(\R)=6\R.\]
  We compute :
  \begin{align*}
    \Delta
    \tilde{\R}+2Q(\tilde{\R})&=\Delta\R+2Q(\R)-4\kappa B(\R,\Id)+2\kappa^2Q(\Id)\\
    &=6\R-12\kappa\Id+6\kappa^2\Id\\
    &=6\tilde{\R}+6\kappa(\kappa-1)\Id.
  \end{align*}
  Here we used that since $\R$ is Einstein and has scalar curvature
  $12$, $B(\R,\Id)=B(\R_{\Id},\Id)=B(\Id,\Id)=Q(\Id)=3\Id$ (these
  identities come from \cite{bw}).

  Note that at each point we have that $\Delta\tilde{\R}$, $Q(\tilde{\R})$ and
  $-\tilde{\R}$ are in $T_{\tilde{\R}}\mathcal{C}$, where $\mathcal{C}$
  is the cone of $NNIC_-$ curvature operators. This follows from the
  fact that $\mathcal{C}$ is a convex Ricci flow invariant cone. This
  implies that $6\kappa(\kappa-1)\Id\in T_{\tilde{\R}}\mathcal{C}$.

  We claim that the scalar curvature of $\tilde{\R}$ is zero, this
  will imply that $\tilde{\R}_{\mathcal{W}_-}=\R _{\mathcal{W}_-}$
  vanishes.

  Assume that the scalar curvature of $\tilde{\R}$ is positive. This
  implies that $\kappa<1$.  Consequently $-\Id\in
  T_{\tilde{\R}}\mathcal{C}$, and we can find $\eps>0$ such that
  $\tilde{\R}-\eps\Id\in\mathcal{C}$. Thus
  $\tilde{\R}=(\tilde{\R}-\eps\Id)+\eps\Id$ is in the interior of
  $\mathcal{C}$ since $\Id$ is in the interior. This contradicts the
  definition of $\kappa$.

  We now deal with the borderline cases. First, since the only locally
  reducible Einstein 4-manifold with nonnegative scalar curvature are
  quotients of $\mathbb{R}^4$ and $\mathbb{S}^2\times\mathbb{S}^2$,
  and both of them are negative K\"ahler, we can assume that $(M,g)$ is
  irreducible. So the only thing to show is that if $(M,g)$ is
  irreducible $NNIC_-$ and not negative K\"ahler, then it is $PIC_-$.

  Using Berger's classification of holonomy groups, we see that $(M,g)$ is irreducible
  not negative K\"ahler if and only if the holonomy group of $(M,g)$
  $G\subset SO(4,\mathbb{R})$ contains the group
  $\mathbb{S}^3_-\subset SO(4,\mathbb{R})$ defined in the appendix. This implies
  in particular that $G$ acts irreducibly on
  $\Lambda^2_-\mathbb{R}^4$.

  It follows from the strong maximum principle that the set :
  \[K=\{\omega\in\Lambda^2_-T^*M\ |\ \tfrac{\scal}{6}|\omega|^2-\ps{\R_{\mathcal{W}_-}\omega}{\omega}=
  0\}\] is invariant under parallel transport. The irreducibility of the
  action of $G$ implies that $K$ is either $0$, in which case $(M,g)$
  is $PIC_-$, or the whole $\Lambda^2_-T^*M$. This implies that
  $\scal=0$ and $\R_{\mathcal{W}_-}=0$ and hence, by Theorem \ref{hit},
  $(M,g)$ is a negative $K3$ surface, hence negative K\"ahler, a contradiction.
\end{proof}


\appendix

\section{$SO(4,\mathbb{R})$, $\Lambda^2\mathbb{R}^4$ and the curvature of $4$ manifolds}
\label{sec:so4-mathbbr-lambd}
In this section we recall a small number of classical facts rekated to
the splitting of $\Lambda^2TM$ in dimension 4. For practical purposes,
we interpret them using quaternions.

We identify $\mathbb{R}^4$ with the quaternions $\mathbb{H}$ by
$(x,y,z,t)\mapsto x+iy+jz+kt$. We denote by $\mathbb{S}^3$ the group
(isomorphic to $SU(2)$) of unit quaternions. The classical double cover
of $SO(4,\mathbb{R})$ by $\mathbb{S}^3\times \mathbb{S}^3$ is given by
:
\begin{align*}
  \pi:\mathbb{S}^3\times \mathbb{S}^3\to\ & SO(4,\mathbb{R})\\
  (q_1,q_2)\mapsto\ &(x\in\mathbb{H}\mapsto q_1xq_2^{-1}).
\end{align*}
This shows that the Lie algebra of $SO(4,\mathbb{R})$ (which is
isomorphic to $\Lambda^2\mathbb{R}^4$) splits as a direct sum :
\[\Lambda^2\mathbb{R}^4=\mathfrak{so}(3,\mathbb{R})\oplus \mathfrak{so}(3,\mathbb{R})\]
This decomposition is exactly the same as the decomposition
$\Lambda^2\mathbb{R}^4=\Lambda^2_-\mathbb{R}^4\oplus
\Lambda^2_+\mathbb{R}^4$ of two-forms into anti-self-dual and
self-dual parts.

We denote by $\mathbb{S}^3_+\subset SO(4,\mathbb{R})$ (resp. $\mathbb{S}^3_-\subset SO(4,\mathbb{R})$) the image of
$\{1\}\times\mathbb{S}^3$ (resp. $\mathbb{S}^3 \times \{1\}$) under
$\pi$. $\mathbb{S}^3_+$ acts irreducibly on $\Lambda^2_+\mathbb{R}^4$
and trivially on $\Lambda^2_-\mathbb{R}^4$.

From this decomposition, the following decomposition of the
representation of $SO(4,\mathbb{R})$ on the space
$S^2_B\Lambda^2\mathbb{R}^4$ of algebraic curvature operators holds :
\[S^2_B\Lambda^2\mathbb{R}^4=\mathbb{R}\Id\oplus
S^2_0\mathbb{R}^n\wedge\id\oplus \mathcal{W}_+\oplus\mathcal{W}_-,
\]
where $\Id:\Lambda^2\mathbb{R}^4\to\Lambda^2\mathbb{R}^4$ is the
identity operator, $S^2_0\mathbb{R}^n\wedge\id$ is the space of pure
traceless Ricci tensor, and $\mathcal{W}_+$ (resp. $\mathcal{W}_-$) is
the space of self-dual (resp. anti-self-dual) Weyl curvature
tensors. Note that $\mathcal{W}_+$ (resp. $\mathcal{W}_-$) is in fact the space of traceless
symmetric operators on $\Lambda^2_+\mathbb{R}^4$
(resp. $\Lambda^2_-\mathbb{R}^4$). If $\R$ is a curvature operator, we
will write its decomposition as $\R=\R_{\Id}+\R_0+\R_{\mathcal{W}_+}+\R_{\mathcal{W}_-}$.



\begin{thebibliography}{10}
\bibitem{bes} A. Besse,
\textit{ Einstein Manifolds },
Ergebnisse der Mathematik und ihrer Grenzgebiete, Volume {\bf 10}, Springer-Verlag.



\bibitem{bw} C. B{\"o}hm, B. Wilking,
\textit{Manifolds with positive curvature operators are space forms},
Annals of Math. {\b 167} (2008), 1079-1097.

\bibitem{bon} J. M. Bony,
\textit{Principe du maximum, in \'egalit\'e de Harnack et unicit\'e du probl\`eme de
Cauchy pour les op\'erateurs elliptiques d\'eg\'en\'er\'es}, Ann. Inst. Fourier (Grenoble) {\b 19} (1969),
277 -- 304.

\bibitem{bre} S. Brendle,
\textit{Einstein manifolds with nonnegative isotropic curvature are locally symmetric },
Duke Math. J. {\b 151} (2010), 1-21.

\bibitem{bre2} S. Brendle,
\textit{Einstein metrics and preserved curvature conditions for the Ricci flow}, Complex and Differential Geometry
Conference at Hannover 2009,Springer Proceedings in Mathematics Volume {\b 8} (2011), 81-85.

\bibitem{bs1} S. Brendle, R. Schoen,
\textit{Classification of manifolds with weakly $\frac{1}{4}$-pinched curvatures}, Acta Mathematica, {\b 200} (2008),
287-307.

\bibitem{bs2} S. Brendle, R. Schoen,
\textit{Manifolds with $\frac{1}{4}$-pinched curvatures are space forms}, Journal of AMS, {\b 22} (2009), 287-307.

\bibitem{cz} B.-L. Chen, X.-P. Zhu,
\textit{ Ricci flow with surgery on four-manifolds with
positive isotropic curvature},
J. Diff. Geom. {\b 74} (2006), 177-264.

\bibitem{der} A. Derdzinski,
\textit{Self-dual K\"ahler manifolds and Einstein manifolds of dimension four},
Comp. Math. {\b 49} (1983), 405-433.

\bibitem{freed}D. Freed, K. Uhlenbeck, \textit{Instantons and four-manifolds,} Mathematical
Sciences Research Institute Publications, vol. 1 (1991).

\bibitem{fk} T. Friedrich, H. Kurke,
\textit{Compact four-dimensional self dual Einstein manifolds with positive scalar curvature},
Math. Nachr. {\bf 106} (1982) 271-299.

\bibitem{gl} M. Gursky, C. LeBrun,
\textit{On Einstein Manifolds of Positive Sectional Curvature},
Annals of Global Analysis and Geometry {\bf 17} (1999) 315-328.

\bibitem{gms} H. S. Gururaja, S. Maity, H. Seshadri,
\textit{On Wilking's criterion for the Ricci flow},
Math. Z. {\bf 274} (2013) 471-481.



\bibitem{ham} R. Hamilton,
\textit{Four-manifolds with positive isotropic curvature},
Comm. Anal. Geom. {\bf 5} (1997) 1-92.

\bibitem{ham2} R. Hamilton,
\textit{Four-manifolds with positive curvature operator}, J. Diff. Geom,
{\b 22} (1986), 153-179.

\bibitem{hit} N. Hitchin,
\textit{Compact four-dimensional Einstein manifolds},
J. Diff. Geom. {\bf 9} (1974) 435-441.

\bibitem{hoe} S. Hoelzel,
\textit{Surgery stable curvature conditions},
arXiv:1303.6531.
\bibitem{niwal} Ni, L. and N. Wallach (2008) \textit{Four-Dimensional Gradient Shrinking Solitons},
International Mathematics Research Notices, Vol. \textbf{2008}, Article ID
rnm152.

\bibitem{mm} M. Micallef, J. Moore
\textit{Minimal Two-Spheres and the Topology of Manifolds with Positive Curvature on Totally Isotropic Two-Planes}, Annals of Math. (1988), {\bf 127}, 199-227.

\bibitem{mw} M. Micallef, M . Wang,
\textit{Metrics with nonnegative isotropic curvature}, Duke Math. J. {\bf 72},
Number 3 (1993), 649-672.

\bibitem{ngu} H. T. Nguyen,
\textit{Isotropic curvature and the Ricci flow}, Int. Math. Res. Not., {\b 3} (2010), 536-558.


\bibitem{rs} T. Richard, H. Seshadri,
\textit{Noncoercive Ricci flow invariant cones}, arXiv:1308.1190.

\bibitem{ses} H. Seshadri,
\textit{Manifolds with nonnegative isotropic curvature}, Comm. Analysis and Geometry (2009), {\bf 4}, 621-635.

\bibitem{wil} B. Wilking,
\textit{A Lie algebraic approach to Ricci flow invariant curvature conditions and Harnack inequalities}, J. Reine Angew. Mathematik (2013), {\bf 679}, 223-247.





\end{thebibliography}
\end{document}